\newcommand{\R}{\mathbb R}
\newcommand{\N}{\mathbb N}
\newcommand{\dis}{\displaystyle}
\def\({\left(}
\def\){\right)}
\def\<{\left\langle}
\def\>{\right\rangle}
\def\O{\mathcal O}
\def\d{{\partial}}
\def\eps{\varepsilon}
\DeclareMathOperator{\IM}{Im}
\def\Eq#1#2{\mathop{\sim}\limits_{#1\rightarrow#2}}
\def\Tend#1#2{\mathop{\longrightarrow}\limits_{#1\rightarrow#2}}
\theoremstyle{plain}
\newtheorem{theorem}{Theorem} [section]
\newtheorem{lemma}[theorem]{Lemma}
\newtheorem{proposition}[theorem]{Proposition}
\theoremstyle{remark}
\newtheorem{remark}[theorem]{Remark}
\theoremstyle{definition}
\newtheorem{definition}[theorem]{Definition}
\newtheorem{assumption}[theorem]{Assumption}
\newtheorem*{conjecture}{Conjecture}
\def\Eq#1#2{\mathop{\sim}\limits_{#1\rightarrow#2}}
\def\Tend#1#2{\mathop{\longrightarrow}\limits_{#1\rightarrow#2}}
\numberwithin{equation}{section}
\begin{document}

\title[Cubic-quintic Schr\"odinger equation]{Orbital stability vs. scattering in 
  the cubic-quintic Schr\"odinger equation}  

\author[R. Carles]{R\'emi Carles}
\address{Univ Rennes, CNRS\\ IRMAR - UMR 6625\\ F-35000
  Rennes, France}
\email{Remi.Carles@math.cnrs.fr}

\author[C. Sparber]{Christof Sparber}
\address{Department of Mathematics, Statistics, and
  Computer Science\\
  M/C 249\\
University of Illinois at Chicago\\
851 S. Morgan Street
Chicago\\ IL 60607, USA} 
\email{sparber@uic.edu}

\begin{abstract}
 We consider the cubic-quintic nonlinear Schr\"odinger equation in space
 dimension up to three. The cubic nonlinearity is thereby focusing while the
 quintic one is defocusing, ensuring global
 well-posedness of the Cauchy problem in the energy space. The main
 goal of this paper is to investigate the interplay between dispersion and orbital (in-)stability of solitary waves. 
 In space dimension one, it is already known that all solitons
 are orbitally stable. In dimension two, we show that if the 
 initial data belong to the conformal space, and have at most the mass of the ground state
 of the cubic two-dimensional Schr\"odinger equation, then the solution is 
 asymptotically linear. For larger mass, solitary wave solutions 
 exist, and we review several results on their stability. Finally, in dimension
 three, relying on previous 
 results from other authors, we show that solitons may or may not
 be orbitally stable. 
\end{abstract}
\thanks{RC is supported by Rennes M\'etropole through its AIS program. CS acknowledges support by the NSF through grant no. DMS-1348092}
\maketitle


\section{Introduction and main results}
\label{sec:intro}


\subsection{Basic setting} We consider the nonlinear Schr\"odinger equation (NLS) with competing 
cubic-quintic nonlinearities,
\begin{equation}
  \label{eq:nls}
  i\d_t u +\frac{1}{2}\Delta u =-|u|^2u+|u|^4u,\quad x\in \R^d,
\end{equation}
in space dimension $d\le 3$. The quintic nonlinearity was introduced
in \cite{Pushkarov}, and adopted
in several physical situations: typically in optics (see
e.g. \cite{Kivshar03}), or in Bose-Einstein
condensation (e.g. \cite{PhysRevA63,JPhysB,PhysRevE}). We refer to the
review \cite{Malomed19} for more precise references. In particular,
the incorporation of the defocusing quintic term is motivated by the
stabilization of two- and three-dimensional vortex solitons. 

Recall some of basic features of this nonlinearity in terms of
criticality for the Cauchy problem. Depending on the space dimension, the NLS is seen to be:
\smallbreak

$\bullet$ $d=1$: focusing $L^2$-subcritical plus defocusing $L^2$-critical (and $H^1$-subcritical).

$\bullet$ $d=2$:  focusing $L^2$-critical plus defocusing
  $L^2$-supercritical  (and $H^1$-subcritical).
  
$\bullet$ $d=3$: focusing $L^2$-supercritical plus defocusing
  $H^1$-critical.
\smallbreak

It is already known from the case of more general, gauge-invariant nonlinearities (see
e.g. \cite{CazCourant}), that equation \eqref{eq:nls} formally enjoys three basic conservation laws, namely:
\smallbreak

$\bullet$ Mass: $\dis
  M(u)=\| u(t, \cdot) \|_{L^2(\R^d)}^2,$

$\bullet$  Angular momentum: $\dis
 J(u)=\IM\int_{\R^d}\bar u(t,x)\nabla u(t,x)dx,$

$\bullet$ Energy:
$\dis  E(u) = \frac{1}{2}\|\nabla u(t, \cdot)\|_{L^2(\R^d)}^2 -\frac{1}{2}\|
u(t, \cdot)\|_{L^4(\R^d)}^4+ \frac{1}{3}\|  u(t, \cdot)\|_{L^6(\R^d)}^6.$
\smallbreak

In dimensions $2$ and $3$, an effect of the quintic term
is to {\it prevent finite time blow-up} which may occur in the purely cubic
case (cf. \cite{CazCourant}). Indeed, the conservation of the energy, combined with H\"older's inequality,
\begin{equation}\label{eq:holder}
  \|u \|_{L^4(\R^d)}^4 \le \|u \|_{L^2(\R^d)}\|u \|_{L^6(\R^d)}^3, 
\end{equation}
shows that the cubic focusing part cannot be an obstruction to global
well-posedness, at least in $H^1$. For $d\le 2$, global well-posedness
then follows from classical results (see e.g. \cite{CazCourant}). For $d=3$, we
refer to \cite{Zhang06}, as the quintic term is energy-critical. 

\begin{proposition}[Global well-posedness]\label{prop:GWP}
  Let $d\le 3$. For any $u_0\in H^1(\R^d)$, \eqref{eq:nls} has a
  unique global solution $u\in C(\R;H^1(\R^d))$ such that $u_{\mid
    t=0}=u_0$. The solution obeys the conservation of mass, energy, and
  momentum. If in addition
  \begin{equation*}
    u_0\in \Sigma:=\left\{f\in H^1(\R^d),\ x\mapsto |x |f \in L^2(\R^d)\right\},
  \end{equation*}
  then $u \in C(\R;\Sigma)$. 
\end{proposition}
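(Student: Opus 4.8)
The plan is to establish global well-posedness by first constructing a local solution, then using the conservation laws together with the inequality \eqref{eq:holder} to prevent blow-up, and finally upgrading regularity to the space $\Sigma$ by propagating the weight $|x|$.

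**Local existence.** First I would invoke standard Strichartz-estimate theory for the Cauchy problem in $H^1(\R^d)$. For $d\le 2$ the nonlinearity $f(u)=-|u|^2u+|u|^4u$ is $H^1$-subcritical, so a fixed-point argument in a suitable Strichartz space yields a unique local solution $u\in C((-T,T);H^1(\R^d))$ with $T$ depending only on $\|u_0\|_{H^1}$; this is the classical result from \cite{CazCourant}. For $d=3$ the quintic term is $H^1$-energy-critical, so the contraction argument is more delicate: one cannot close the estimate using only the $H^1$ norm but must work in the Strichartz norm $L^{10}_{t,x}$ adapted to the energy-critical exponent, and local existence holds for data small in that norm or on a time interval depending on the profile of $u_0$ (not merely its $H^1$ norm). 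I would cite \cite{Zhang06} for this energy-critical local theory rather than reprove it. Along the way the conservation of mass, energy, and momentum follows either from the regularized-equation approximation or directly from the Strichartz solution by standard density arguments.

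**Global existence via a priori bounds.** The key observation is that the conserved energy controls the $H^1$ norm uniformly in time. Writing $E(u)=\tfrac12\|\nabla u\|_{L^2}^2-\tfrac12\|u\|_{L^4}^4+\tfrac13\|u\|_{L^6}^6$ and using \eqref{eq:holder}, I would bound the focusing cubic term by
\[
\tfrac12\|u\|_{L^4}^4 \le \tfrac12\|u\|_{L^2}\|u\|_{L^6}^3 \le \tfrac13\|u\|_{L^6}^6 + C\|u\|_{L^2}^2,
\]
applying Young's inequality $ab\le \tfrac{a^p}{p}+\tfrac{b^{p'}}{p'}$ with the pair $(2,3)$ to the factors $\|u\|_{L^6}^3$ and $\|u\|_{L^2}$. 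The defocusing sextic contribution then absorbs the focusing quartic one, leaving
\[
\tfrac12\|\nabla u\|_{L^2}^2 \le E(u_0)+C M(u_0),
\]
so that $\|u(t)\|_{H^1}$ is bounded independently of $t$ by the conserved quantities alone. This uniform bound lets me iterate the local existence result on successive time intervals of fixed length for $d\le 2$, giving a solution global in time. For $d=3$ the same $H^1$ bound rules out finite-time blow-up, and combined with the energy-critical theory of \cite{Zhang06} (where global well-posedness for the defocusing energy-critical problem holds) one obtains the global solution.

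**Persistence in $\Sigma$.** For the final claim I would differentiate $\|xu(t)\|_{L^2}^2$ in time, or equivalently propagate the operator $J(t)=x+it\nabla$ that commutes with the free Schr\"odinger flow. The quantity $\|J(t)u\|_{L^2}$ satisfies an integral inequality driven by the nonlinearity, and since the nonlinearity is gauge-invariant the commutator estimates are standard; one shows $\|xu(t)\|_{L^2}\le C(t)$ remains finite for all $t$, with the already-established $H^1$ bound controlling the gradient terms that arise. The main obstacle is the $d=3$ case: because the quintic term is energy-critical, neither the local theory nor the weighted-space propagation is a soft consequence of subcritical arguments, and one must lean genuinely on the critical Strichartz machinery of \cite{Zhang06}; the subcritical cases $d\le 2$ are, by contrast, essentially textbook. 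I would therefore present the $d\le 2$ argument in full and defer the $d=3$ critical estimates to the cited reference.
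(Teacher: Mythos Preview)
Your proposal is correct and matches the paper's approach: the paper does not give a formal proof of this proposition but justifies it in the preceding paragraph by exactly the same ingredients you use---the H\"older inequality \eqref{eq:holder} to show the focusing cubic term is dominated by the defocusing sextic one, then citing \cite{CazCourant} for $d\le 2$ and \cite{Zhang06} for the energy-critical case $d=3$. One small slip: the exponents you call ``the pair $(2,3)$'' in Young's inequality are not conjugate; you simply need $ab\le \eps a^2 + C_\eps b^2$ with $a=\|u\|_{L^6}^3$, $b=\|u\|_{L^2}$, and $\eps$ small enough that the $L^6$ term is absorbed.
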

Numerically, one observes a kind of oscillatory behavior within the solution $u$, which is due to the 
competition of focusing and defocusing effects within \eqref{eq:nls}, cf. \cite{SuSu} for more details. 

\begin{remark} Recall that in dimension $d=2$ or $3$, the quintic term is
$L^2$-supercritical, so we cannot hope to solve the Cauchy problem at
this regularity level. Moreover, since the cubic-quintic nonlinearity stems
from physics, it is sensible to work in $H^1$, where the energy is
well-defined.
\end{remark}

Complementing the case of prescribed initial data, we may also want to prescribe
asymptotic states (or scattering states) and an asymptotically linear
behavior, provided that $d\ge 2$. 
We thereby recall that in the case $d=1$, the cubic nonlinearity causes long-range effects, and no non-trivial solution to
\eqref{eq:nls} can be asymptotically linear, cf. \cite{Barab}. However, in dimensions $d=2, 3$ one can rely on classical techniques (see e.g. \cite{CazCourant}) 
or the results of \cite{Zhang06}, respectively, to obtain:

\begin{proposition}[Scattering]\label{prop:scattering}
 Let $d=2$ or $3$. For any $u_-\in H^1(\R^d)$, \eqref{eq:nls} has a
  unique global solution $u\in C(\R;H^1(\R^d))$ such that
  \begin{equation*}
    \left\|u(t, \cdot)-e^{i\frac{t}{2}\Delta}u_-\right\|_{H^1(\R^d)}\Tend t {-\infty}0.
  \end{equation*}
  In particular,
  \begin{equation*}
    M(u)=\|u_-\|^2_{L^2},\quad E(u)=\frac{1}{2}\|\nabla
    u_-\|_{L^2}^2,\quad \forall t\in \R.
  \end{equation*}
  If in addition $u_-\in \Sigma$, then $u\in C(\R;\Sigma)$ and
  \begin{equation*}
       \left\|e^{-i\frac{t}{2}\Delta}u(t, \cdot)-u_-\right\|_{\Sigma}\Tend t {-\infty}0.
  \end{equation*}
\end{proposition}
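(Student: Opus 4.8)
The plan is to construct the wave operator by solving the Duhamel (integral) formulation of \eqref{eq:nls} with prescribed behavior at $t=-\infty$, and then to invoke Proposition~\ref{prop:GWP} to propagate the solution to all times. Writing $N(u)=-|u|^2u+|u|^4u$, a solution with the prescribed scattering state solves
\begin{equation*}
  u(t)=e^{i\frac t2\Delta}u_- -i\int_{-\infty}^t e^{i\frac{t-s}2\Delta}N(u(s))\,ds,
\end{equation*}
and it suffices to produce such a $u$ on a half-line $(-\infty,-T]$ with $T$ large; Proposition~\ref{prop:GWP} then extends it to a global solution in $C(\R;H^1)$, with uniqueness inherited from that proposition. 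I would set up a contraction in a space built on Strichartz norms, using admissible pairs $(q,r)$ with $\frac 2q=d\(\frac 12-\frac 1r\)$. The crucial point, which replaces any smallness assumption on the data, is that for fixed $u_-\in H^1$ the free evolution $e^{i\frac t2\Delta}u_-$ has Strichartz norms over $(-\infty,-T]$ that tend to $0$ as $T\to+\infty$; choosing $T$ large then makes the relevant norms small enough to close the fixed point.

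For $d=2$ this is the case to carry out by hand. The cubic term is $L^2$-critical and is naturally estimated in the pair $(q,r)=(4,4)$, while the quintic term is $L^2$-supercritical but $H^1$-subcritical, so it must be controlled at the level of one derivative using a fractional Leibniz (Kato--Ponce) rule together with Sobolev embedding. I would run the contraction in a space controlling both $u$ and $\nabla u$ in a finite family of Strichartz norms, estimate $N(u)$ and $\nabla N(u)$ in the dual spaces, and verify the contraction factor is $<1$ for $T$ large. For $d=3$ the quintic term is energy-critical, and the construction of the wave operator rests on the energy-critical scattering theory; here I would simply invoke the results of \cite{Zhang06}, whose Strichartz and stability estimates are tailored to exactly this situation.

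The conservation identities then follow from continuity together with the dispersive decay of the free flow. Since $M$ and $E$ are conserved by Proposition~\ref{prop:GWP}, it is enough to compute their limits as $t\to-\infty$. Unitarity of $e^{i\frac t2\Delta}$ on $L^2$ and on $\dot H^1$, combined with the $H^1$-convergence, gives $\|u(t)\|_{L^2}\to\|u_-\|_{L^2}$ and $\|\nabla u(t)\|_{L^2}\to\|\nabla u_-\|_{L^2}$. For the potential terms, Gagliardo--Nirenberg and Sobolev embedding bound $\|u(t)\|_{L^4}$ and $\|u(t)\|_{L^6}$ by $\|u(t)-e^{i\frac t2\Delta}u_-\|_{H^1}+\|e^{i\frac t2\Delta}u_-\|_{L^p}$; the first piece vanishes by scattering, and the second vanishes by the $L^{p'}\to L^p$ dispersive estimate for $u_-$ in a dense subclass, extended to all of $H^1$ by the uniform $H^1$ bound. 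Hence $\|u(t)\|_{L^4},\|u(t)\|_{L^6}\to 0$, and therefore $E(u)=\frac12\|\nabla u_-\|_{L^2}^2$.

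For the statement in $\Sigma$, the natural tool is the Galilei operator $J(t)=x+it\nabla=e^{i\frac t2\Delta}\,x\,e^{-i\frac t2\Delta}$, which commutes with the free Schr\"odinger flow and satisfies $\|J(t)u\|_{L^2}=\|x\,e^{-i\frac t2\Delta}u\|_{L^2}$, so that controlling $J(t)u$ and $\nabla u$ in $L^2$ keeps $u(t)$ in $\Sigma$ and governs the convergence $\|e^{-i\frac t2\Delta}u(t)-u_-\|_\Sigma$. Because $J(t)$ acts on the nonlinearity through a Leibniz rule modulo a gauge factor of modulus one (so that $|J(t)u|$ plays the role of $|\nabla u|$ in the nonlinear estimates), I would repeat the fixed-point argument in the enlarged space that also measures $J(t)u$ in Strichartz norms, again using that $J(t)e^{i\frac t2\Delta}u_-=e^{i\frac t2\Delta}xu_-$ has small Strichartz norm over $(-\infty,-T]$. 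I expect the main obstacle to be the quintic term: in $d=2$ its supercriticality forces every estimate to be carried out at the level of a full derivative (and of $J(t)$ for the $\Sigma$ part), leaving no room to exploit the $L^2$-critical structure of the cubic term, while in $d=3$ the energy-critical scaling is precisely what makes the direct contraction fail and necessitates the concentration-compactness and perturbation machinery behind \cite{Zhang06}.
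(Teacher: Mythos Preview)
Your proposal is correct and follows the standard route the paper has in mind: the paper does not give a proof of this proposition but simply attributes it to classical techniques in \cite{CazCourant} for $d=2$ and to \cite{Zhang06} for $d=3$, and your sketch (Duhamel from $-\infty$, contraction in Strichartz spaces using the smallness of the free-flow tail, global extension via Proposition~\ref{prop:GWP}, and the Galilei operator $J(t)$ for the $\Sigma$ statement) is exactly the argument those references supply. The specific admissible pairs $(4,4)$ and $(3,6)$ you single out for the cubic and quintic pieces in $d=2$ are the same ones the paper itself uses later in Lemma~\ref{lem:reduc}, so the mechanics match as well.
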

We recall that $e^{i\frac{t}{2}\Delta}$ is unitary on $H^1(\R^d)$, but
not on $\Sigma$ (see e.g. \cite{CazCourant}), hence the final formula
above.
\smallbreak

As in the case with purely cubic nonlinearity, not every finite-energy solution of \eqref{eq:nls} is necessarily asymptotically
linear. Finite time blow-up is of course ruled out in our case, but time-periodic {solitary
wave solutions} also exist. 

\begin{definition}\label{def:soliton}
  A \emph{standing wave} or \emph{soliton} of \eqref{eq:nls} is a
  solution of the form $e^{i\omega t}\phi(x)$, with $\omega\in \R$ and
  $\phi$ satisfying
  \begin{equation}\label{eq:solitondef}
    -\frac{1}{2}\Delta \phi + \omega \phi
    -|\phi|^2\phi+|\phi|^4\phi=0,\quad \phi\in
    H^1(\R^d)\setminus\{0\}. 
  \end{equation}
  The associated action is given by
  \begin{equation*}
    S(\phi) = \frac{1}{2}\|\nabla \phi\|_{L^2}^2 +\omega
    \|\phi\|_{L^2}^2 -\frac{1}{2}\|\phi\|_{L^4}^4+\frac{1}{3}\|\phi\|_{L^6}^6.
  \end{equation*}
 A solution $\phi$ is a \emph{ground state} if $S(\phi)\le S(\varphi)$ for any solution
 $\varphi$ of \eqref{eq:solitondef}.
\end{definition}
As we will see in Section~\ref{sec:soliton}, if $d\le 3$, \eqref{eq:solitondef}
admits a solution if and only if
\begin{equation*}
  0<\omega<\tfrac{3}{16}. 
\end{equation*}
It turns out that for $d=1$, explicit solitary wave solutions are available for this
range of $\omega$, see below. In the present paper, we will review (and expand on) several results about the (in-)stability 
of solitary waves, a question which is closely related to dispersive effects in
\eqref{eq:nls}. Due to the invariants of the equations (in our case,
translation in space and multiplication by $e^{i\theta}$ for a
constant $\theta$), it is customary to consider {\it orbital
stability}, for which two approaches are available in the case of
nonlinear Schr\"odinger equations: the first one historically, due to
Cazenave and Lions \cite{CaLi82}, consists in showing that the set of
energy minimizers, subject to a mass constraint, is stable under the flow of
the equation. In some cases (typically, when the nonlinearity is
homogeneous, as well as for the logarithmic nonlinearity \cite{Ar16,Caz83}), one is able to describe this set in more detail. The other one,  known as
Grillakis-Shatah-Strauss theory, was introduced in \cite{GSS87} (see
also \cite{BGR15}), and generalized the ideas developed by
M.~Weinstein in \cite{Weinstein85,Weinstein86CPAM}. This approach has proven particularly
useful in the case of homogeneous nonlinearities
(or asymptotically
homogeneous ones, see \cite{BGR15} and references therein) 
and in space dimension one (e.g. \cite{IlievKirchev93}). We will collect several results on both of these approaches (depending on $d=1,2,3$), 
and accordingly introduce the following two notions of
orbital stability.

\begin{definition}\label{def:set-stability}
  For  $\rho>0$, denote
  \begin{equation*}
    \Gamma(\rho) = \left\{ u\in H^1(\R^d),\  M(u)=\rho\right\},
  \end{equation*}
and assume that the minimization problem
  \begin{equation}\label{eq:8.3.5}
u\in \Gamma(\rho),\quad E(u)=    \inf \{ E(v)\ ;\  v\in \Gamma(\rho)\} 
  \end{equation}
 has a solution. Denote by $\mathcal E(\rho)$ the set of such
 solutions. We say that solitary waves are $\mathcal E(\rho)$-{\it orbitally
 stable}, if for all $\eps>0$, there exists $\delta>0$ such that if
 $u_0\in H^1(\R^d)$ satisfies
 \[\inf_{\phi\in \mathcal E(\rho)}\|u_0-\phi\|_{H^1(\R^d)}\le \delta,\]
  then the
  solution to \eqref{eq:nls} with $u_{\mid t=0}=u_0$ satisfies
  \begin{equation*}
    \sup_{t\in \R}\inf_{\phi\in \mathcal E(\rho)}\left\|u(t,
      \cdot)-\phi\right\|_{H^1(\R^d)}\le \eps. 
  \end{equation*}
\end{definition}
We note that if $\phi\in \mathcal E(\rho)$, then 
\[
\{ e^{i\theta}
\phi(\cdot -y);\ \theta\in\R,\ y\in \R^d\}\subset \mathcal
E(\rho).
\] 
When the nonlinearity is homogeneous (and
$L^2$-subcritical), this inclusion becomes an equality, see
\cite{CaLi82,CazCourant}. In this case, the above notion meets the following
one, which is stronger, in general: 
\begin{definition}\label{def:stability}
  Let $\phi$ be a solution of \eqref{eq:solitondef}. The standing wave
  $e^{i\omega t}\phi(x)$ is  orbitally stable in $H^1(\R^d)$, if
  for all $\eps>0$, there exists $\delta>0$ such that if $u_0\in
  H^1(\R^d)$ satisfies
  \[\|u_0-\phi\|_{H^1(\R^d)}\le \delta,\]
  then the
  solution to \eqref{eq:nls} with $u_{\mid t=0}=u_0$ satisfies
  \begin{equation*}
    \sup_{t\in \R}\inf_{{\theta\in \R}\atop{y\in
      \R^d}}\left\|u(t, \cdot)-e^{i\theta}\phi(\cdot
      -y)\right\|_{H^1(\R^d)}\le \eps.
  \end{equation*}
  Otherwise, the standing wave is said to be unstable. 
\end{definition}
We emphasize that for ``truly'' non-homogeneous nonlinearities in dimensions $d\ge 2$ {\it only} $\mathcal E(\rho)$-orbitally 
stability is known; see e.g. \cite{CoJeSq10,Shibata2014}. Moreover, it is not clear in general that 
ground states are members of $\mathcal E(\rho)$.


\subsection{One-dimensional case}

In the case $d=1$, the overall picture is very neat. Firstly, for
$0<\omega<\tfrac{3}{16}$, solutions to \eqref{eq:solitondef} are
given by (\cite{Pushkarov}, see also \cite{Cowan})
\begin{equation}\label{eq:phi1D}
  \phi(x) =
  2\sqrt{\frac{\omega}{1+\sqrt{1-\tfrac{16\omega}{3}}\cosh\(2x\sqrt{2\omega}\)}}. 
\end{equation}
Note that in view of \cite{BL83a}, this real-valued solution is unique, up to
translation and change of sign.
The orbital stability of these nonlinear ground states was established in
\cite[Theorem~3, case (1)]{Ohta95}. 
\begin{proposition}[Orbital stability in 1D]
  Let $d=1$, and $0<\omega<\tfrac{3}{16}$. The solitary wave $e^{i\omega
    t}\phi(x)$, where $\phi$ is given by \eqref{eq:phi1D}, is
  orbitally stable. 
\end{proposition}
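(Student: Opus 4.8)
The plan is to follow the Grillakis--Shatah--Strauss (GSS) approach combined with the Vakhitov--Kolokolov slope criterion, which is precisely the route of the cited work of Ohta. First I would record that $\phi=\phi_\omega$ in \eqref{eq:phi1D} is the positive, even, strictly decreasing ground state, unique up to translation and sign by \cite{BL83a}, and that it is a critical point of the action $S=S_\omega$, since \eqref{eq:solitondef} reads $S'(\phi)=0$. Writing $S_\omega=E+\omega M$ and $d(\omega):=S_\omega(\phi_\omega)$, the GSS theory reduces orbital stability to a spectral analysis of the Hessian $S_\omega''(\phi_\omega)$ together with the sign of $d''(\omega)$. Because $\phi_\omega$ is a critical point, the terms involving $\partial_\omega\phi_\omega$ drop when differentiating, so $d'(\omega)=M(\phi_\omega)=\|\phi_\omega\|_{L^2}^2$, and the stability criterion becomes $d''(\omega)=\partial_\omega\|\phi_\omega\|_{L^2}^2>0$.

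Second, I would verify the GSS spectral hypotheses by Sturm--Liouville theory, which is available in one dimension. Linearizing about the real ground state produces the two operators
\[
  L_+ = -\tfrac12\partial_x^2 + \omega - 3\phi^2 + 5\phi^4,\qquad
  L_- = -\tfrac12\partial_x^2 + \omega - \phi^2 + \phi^4,
\]
acting on the real and imaginary parts of the perturbation. Differentiating \eqref{eq:solitondef} in $x$ gives $L_+\phi'=0$, and since $\phi'$ has exactly one zero it is the second eigenfunction of $L_+$; hence $L_+$ has exactly one simple negative eigenvalue, with kernel spanned by $\phi'$ (translation). Equation \eqref{eq:solitondef} itself gives $L_-\phi=0$, and as $\phi>0$ has no node it is the ground state of $L_-$, so $L_-\ge 0$ with kernel spanned by $\phi$ (phase). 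Thus the Hessian has exactly one negative direction, which is precisely the configuration under which GSS equates orbital stability with $d''(\omega)>0$.

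The crux is then to compute $\|\phi_\omega\|_{L^2}^2$ and show it is increasing. Here the explicit formula is indispensable: multiplying \eqref{eq:solitondef} by $\phi'$ and integrating yields the first integral $(\phi')^2=\phi^2\big(2\omega-\phi^2+\tfrac23\phi^4\big)$, and the substitution $s=\phi^2$ collapses the mass to the single integral
\[
  \|\phi_\omega\|_{L^2}^2 = \int_0^{s_{\max}(\omega)} \frac{ds}{\sqrt{\tfrac23 s^2 - s + 2\omega}},\qquad s_{\max}(\omega)=\tfrac34\Big(1-\sqrt{1-\tfrac{16\omega}{3}}\,\Big).
\]
Evaluating in closed form and simplifying via $1-\tfrac{16\omega}{3}=(1-4\sqrt{\omega/3})(1+4\sqrt{\omega/3})$, one finds $\|\phi_\omega\|_{L^2}^2=\sqrt{3/2}\,\operatorname{arctanh}\big(4\sqrt{\omega/3}\big)$, which is manifestly strictly increasing on $(0,\tfrac{3}{16})$ (and reduces, as $\omega\to 0$, to the cubic-soliton mass $2\sqrt2\,\sqrt\omega$, a useful consistency check). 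Hence $d''(\omega)>0$ throughout the admissible range, and orbital stability follows.

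The main obstacle is twofold. On the analytic side the nonlinearity is genuinely non-homogeneous, so the scaling arguments that trivialize the homogeneous case are unavailable; this is circumvented by the Sturm--Liouville verification of the spectral picture, which never uses homogeneity. On the computational side the mass integral has a moving, square-root-singular endpoint $s_{\max}(\omega)$, so differentiating under the integral sign is delicate; I would instead derive the closed form above and only then differentiate, which sidesteps the singularity entirely and renders the monotonicity $\partial_\omega\|\phi_\omega\|_{L^2}^2>0$ transparent.
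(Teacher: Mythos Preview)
Your proposal is correct and follows essentially the same route the paper attributes to Ohta: the Grillakis--Shatah--Strauss framework, with the spectral picture checked by Sturm--Liouville oscillation (the paper's reference to \cite{IlievKirchev93}), and the slope condition $\partial_\omega M(\phi_\omega)>0$ verified via the first integral of the second-order ODE (the paper's ``explicit formula for second order ODEs without first order derivatives''). Your closed-form evaluation $\|\phi_\omega\|_{L^2}^2=\sqrt{3/2}\,\operatorname{arctanh}\!\big(4\sqrt{\omega/3}\big)$ is correct and makes the monotonicity transparent, so there is nothing to add.
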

The proof of this result combines the well-known Grillakis-Shatah-Strauss criterion \cite{GSS87} with the analysis of \cite{IlievKirchev93} and an explicit
formula for second order ODEs without first order derivatives, a strategy which
seems to be restricted to the 1D case and not suited for solutions to
\eqref{eq:solitondef} in $d\ge 2$.
\smallbreak

We note that in \cite{Ohta95}, more general nonlinearities are
considered, including the following generalization of \eqref{eq:nls}:
\begin{equation*}
  i\d_t u +\frac{1}{2}\d_x^2 u = -|u|^{p-1}u+|u|^{q-1}u,\quad q>p. 
\end{equation*}
It is shown that if $p\le 5$, then all ground states are orbitally
stable. On the other hand, if $p>5$, ground states with
$\omega>0$ sufficiently small become unstable, while if $\omega$ is sufficiently large
 (but not too large, since ground states have to exist), they remain stable. We note that the value $p=5$
corresponds to an $L^2$-{\it critical}, focusing nonlinearity. It is therefore
natural to expect that in the case of the cubic-quintic nonlinearity
\eqref{eq:nls}, all ground states are orbitally stable when $d=2$,
while in $d=3$ some will be stable and others unstable. We will give
several pieces of rigorous evidence supporting this heuristics. 


\subsection{Two-dimensional case}

We now turn to the case $d=2$ and recall that the results of \cite{TaoVisanZhang07} show that for $\|u_0\|_{L^2}$ {\it sufficiently small}, the
  solution to \eqref{eq:nls} is asymptotically linear. It turns out
  that since the cubic term is $L^2$-critical in 2D, we can in fact be more precise.
  
To this end, let $Q$ be the {\it cubic nonlinear ground state}, i.e., the unique positive radial solution to 
\begin{equation}
  \label{eq:Q2D}
  -\frac{1}{2}\Delta Q+Q -Q^3=0,\quad x\in \R^2.
\end{equation}
 In view of \cite{Weinstein83}, and noting that we have an extra
  factor $\frac{1}{2}$ in front of the Laplacian in \eqref{eq:Q2D}
  compared to \cite{Weinstein83}, the sharp 
  Gagliardo-Nirenberg inequality reads
  \begin{equation}
    \label{eq:GNsharp}
    \|u\|_{L^4(\R^2)}^4\le
    \(\frac{\|u\|_{L^2(\R^2)}}{\|Q\|_{L^2(\R^2)}}\)^2\|\nabla
    u\|_{L^2(\R^2)}^2,\quad \forall u\in H^1(\R^2). 
  \end{equation}
In the focusing cubic case, i.e., without
  the quintic term, we know from 
  \cite{Dodson15} that if $u_0\in L^2$ and  $\|u_0\|_{L^2}< \|Q\|_{L^2}$,
  global existence and scattering hold (see also \cite{KTV09} for the
  case of radial data $u_0$). In the presence of \eqref{eq:nls}, it
  was proved in \cite{Cheng-p} that for $u_0\in H^1$
with $\|u_0\|_{L^2}< \|Q\|_{L^2}$, scattering holds as well, relying
on the cubic case from  \cite{Dodson15}.
In our first main result below, we shall show that 
  the effect of the additional quintic term
  is not only to guarantee global well-posedness, but also to extend
  this dispersive result to the $L^2$-sphere $\{
  \|u_0\|_{L^2}= \|Q\|_{L^2}\}$. We emphasize that we assume $u_0\in
  \Sigma$, not simply $u_0\in H^1(\R^2)$, see 
  Section~\ref{sec:dispersive} for a more 
  precise discussion of this aspect.

\begin{theorem}[Mass (sub-)critical scattering in 2D]\label{theo:2Ddispersion}
  Let $d=2$. If $u_0\in \Sigma$ with 
  \begin{equation*}
    \|u_0\|_{L^2}\le \|Q\|_{L^2},
  \end{equation*}
  then the solution $u\in C(\R;\Sigma)$ to \eqref{eq:nls} such that $u_{\mid t=0}
  =u_0$ is asymptotically linear, i.e. there exist $u_\pm\in \Sigma$
  such that
  \begin{equation*}
    \|e^{-i\frac{t}{2}\Delta}u(t, \cdot) -u_\pm\|_{\Sigma}\Tend t {\pm \infty}0.
  \end{equation*}
\end{theorem}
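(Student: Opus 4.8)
The plan is to prove scattering for the cubic-quintic equation at the mass-critical threshold by exploiting the extra regularity furnished by the conformal space $\Sigma$. The key structural advantage in $d=2$ is that the cubic term is exactly $L^2$-critical, so the scaling that governs it is the same pseudoconformal scaling under which the free Schr\"odinger flow transforms cleanly. My first step would be to recall the pseudoconformal conservation law: for solutions of \eqref{eq:nls}, one controls the quantity built from $\|(x+it\nabla)u\|_{L^2}^2$ together with the potential energy terms, and tracks its time derivative. Working in $\Sigma$ is precisely what makes this quantity finite, which is why the theorem hypothesizes $u_0\in\Sigma$ rather than merely $u_0\in H^1(\R^2)$.

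The heart of the argument is to show that the weighted quantity controlling dispersion stays bounded (indeed decays appropriately) for all time, and this is exactly where the threshold $\|u_0\|_{L^2}\le\|Q\|_{L^2}$ enters. The plan is to combine the sharp Gagliardo--Nirenberg inequality \eqref{eq:GNsharp} with the energy and mass conservation laws to show that the focusing cubic term cannot overwhelm the kinetic energy. Under the conformal flow $\psi(t)=e^{-i\frac{t}{2}\Delta}u(t)$, or equivalently after applying the lens/pseudoconformal transform, the cubic nonlinearity contributes a term whose coefficient is controlled by $1-(\|u_0\|_{L^2}/\|Q\|_{L^2})^2\ge 0$; the sign of this quantity under the mass threshold is what prevents concentration. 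The defocusing quintic term, being $L^2$-supercritical, should then act favorably (it provides additional a priori control rather than an obstruction), so the main task is to verify that it does not spoil the monotonicity and that all the weighted norms remain integrable in time.

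The hard part, I expect, is the equality case $\|u_0\|_{L^2}=\|Q\|_{L^2}$. In the strictly subcritical regime $\|u_0\|_{L^2}<\|Q\|_{L^2}$ the Gagliardo--Nirenberg bound yields a strict coercivity constant, and scattering in $H^1$ already follows from \cite{Cheng-p} and \cite{Dodson15}; one would then only need to upgrade the convergence from $H^1$ to $\Sigma$ using the weighted estimates. At the exact threshold, however, the coercivity degenerates and the bare cubic analysis fails, so one must genuinely use the defocusing quintic term to recover a strict gain. The strategy here is to show that the quintic contribution, however small, breaks the scaling invariance that produced the borderline case: the extra positive sextic term in the energy, inserted into the pseudoconformal identity, provides a strictly positive and time-integrable quantity that closes the estimate even when the quartic piece only gives nonnegative control.

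Finally, once the weighted quantity is shown to be globally bounded in $\Sigma$, scattering follows by a now-standard argument: one shows that $\psi(t)=e^{-i\frac{t}{2}\Delta}u(t)$ is Cauchy in $\Sigma$ as $t\to\pm\infty$ by estimating $\|\psi(t)-\psi(s)\|_\Sigma$ through the Duhamel formula, using the decay of the nonlinear terms in the appropriate Strichartz and weighted norms that the conformal bound provides. The limits $u_\pm\in\Sigma$ are then defined as these Cauchy limits, which yields the stated convergence. I would treat the subcritical and critical masses in parallel where possible, isolating the threshold case as the step requiring the genuinely new input from the quintic nonlinearity.
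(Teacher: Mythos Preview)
Your outline is accurate for the strictly subcritical case $\|u_0\|_{L^2}<\|Q\|_{L^2}$: the pseudoconformal identity plus the sharp Gagliardo--Nirenberg inequality gives a strict coercivity constant $\eta>0$, which yields $J(t)u\in L^\infty_t L^2_x$ and then scattering in $\Sigma$ exactly as you describe.

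The gap is in the critical case $\|u_0\|_{L^2}=\|Q\|_{L^2}$. Your plan is to insert the positive sextic term into the pseudoconformal identity and use it as a ``strictly positive and time-integrable quantity that closes the estimate.'' This is precisely the naive approach the paper tries first and explicitly rejects as insufficient. The pseudoconformal law in the critical-mass case only yields
\[
\|u(t,\cdot)\|_{L^6}^6\lesssim\frac{1}{1+t^2}\qquad\text{and}\qquad\int_0^\infty t\,\|u(t,\cdot)\|_{L^6}^6\,dt<\infty,
\]
and neither of these rules out a decay like $\|u(t)\|_{L^6}^6\approx (t^2(\log t)^2)^{-1}$, under which $u\notin L^3_tL^6_x$ and $\|J(t)u\|_{L^2}$ is \emph{not} bounded. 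The time-integrability of the sextic contribution is simply too weak to control $\|J(t)u\|_{L^2}$ directly when the quartic piece gives only nonnegative (not coercive) control.

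What the paper actually does at the threshold is a proof by contradiction that invokes rigidity for the mass-critical problem. One assumes $\|J(t_n)u\|_{L^2}\to\infty$ along a sequence $t_n\to\infty$, applies the pseudoconformal (lens) transform to produce a function $\psi$ with $\|\nabla\psi(\tau_n)\|_{L^2}\to\infty$ as $\tau_n\to 0^-$, and then uses the Hmidi--Keraani refinement of Merle's classification to show that, after rescaling by $\rho_n=\|\nabla Q\|_{L^2}/\|\nabla\psi(\tau_n)\|_{L^2}$, the sequence converges \emph{strongly} in $H^1$ to $Q$. A truncated-virial argument \`a la Banica, exploiting $E_{\rm cub}(\psi(t))\ge 0$ at critical mass, then forces the quantitative rate $\rho_n\lesssim|\tau_n|$. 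Translating back, this gives $\|u(t_n)\|_{L^6}^6\gtrsim 1$, contradicting the $(1+t^2)^{-1}$ decay already obtained. Your proposal does not contain any of these ingredients (the contradiction setup, the rigidity/compactness input, Banica's argument, the rate bound), and without them the threshold case does not close.
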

On a heuristic level, we may argue in the same fashion as in 
\cite{HoRo08}, and recall that the standard virial computation for \eqref{eq:nls}
yields, 
\begin{equation*}
  \frac{d^2}{dt^2}\int_{\R^2}|x|^2|u(t,x)|^2dx = 2E(u)+\frac{2}{3}\|u(t)\|_{L^6(\R^2)}^6\ge 2E(u_0),
\end{equation*}
where $E(u)=E(u_0)$ is the conserved energy. In view of the sharp
Gagliardo-Nirenberg inequality, we have, under the assumptions of
Theorem~\ref{theo:2Ddispersion},
\begin{equation*}
  \frac{d^2}{dt^2}\int_{\R^2}|x|^2|u(t,x)|^2dx \ge \frac{2}{3}\|u_0\|_{L^6(\R^2)}^6.
\end{equation*}
The time-derivative of the virial of $u$ is therefore increasing, a first hint that the solution is dispersive. In order to make this
statement rigorous, especially in the limiting case
$\|u_0\|_{L^2}= \|Q\|_{L^2}$, we rely on a conformal transform,
and rigidity results regarding the concentration phenomenon in
nonlinear Schr\"odinger equations, see Section \ref{sec:mcrit} below. 

Our second main result concerns the stability of solitary waves:
\begin{theorem}[Nonlinear ground states in 2D]\label{theo:2Dstability}
    Let $d=2$. Then, for all $\omega\in ]0,\tfrac{3}{16}[$, there exists a solitary
    wave solution $u(t,x) = e^{i\omega t}\phi_\omega(x)$ to \eqref{eq:nls}. In addition, we have:
    \begin{enumerate}
    \item For any $\rho>\|Q\|_{L^2}^2$, there exists  a ground state such
      that $\|\phi_\omega\|_{L^2}^2=\rho$. 
      \item The ground state solution is unique, up to translation and
        multiplication by $e^{i\theta}$, for constant $\theta\in \R$.
    \item There exists $0<\omega_0\le \omega_1\le \tfrac{3}{16}$ 
      such that for $\omega \in ]0,\omega_0[\cup ]
      \omega_1,\tfrac{3}{16}[$, $\phi_\omega$  is orbitally stable. 
   \item For any  $\rho>\|Q\|_{L^2}^2$, the set $\mathcal E(\rho)$
       is non-empty and solitary waves are $\mathcal
       E(\rho)$-orbitally stable. 
    \end{enumerate}
  \end{theorem}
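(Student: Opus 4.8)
The plan is to prove the four assertions by combining a variational/ODE construction of the profiles with the two stability theories recalled above. Existence of a nontrivial solution of \eqref{eq:solitondef} for every $\omega\in\,]0,\tfrac{3}{16}[$ I would obtain from the Berestycki--Lions framework \cite{BL83a}: writing the equation for real radial profiles as $-\tfrac12\Delta\phi+\omega\phi=\phi^{3}-\phi^{5}$, the primitive $H(s)=-\tfrac{\omega}{2}s^{2}+\tfrac14 s^{4}-\tfrac16 s^{6}$ is positive for some $s>0$ exactly when the quadratic $t^{2}-\tfrac32 t+3\omega$ in $t=s^{2}$ admits a positive root, that is, precisely for $\omega<\tfrac{3}{16}$; this is the threshold announced in Section~\ref{sec:soliton}. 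After Schwarz symmetrization the ground state may be taken positive and radially decreasing. For the uniqueness in (2) I would reduce to the radial ODE and appeal to the uniqueness theory for ground states of semilinear elliptic equations (in the spirit of Serrin--Tang and McLeod): the structure of the combined nonlinearity $g(s)=s^{3}-s^{5}$ is what rules out a second positive radial solution, and uniqueness up to translation and the phase $e^{i\theta}$ follows.

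For (1) I would then analyze the mass $\rho(\omega):=\|\phi_\omega\|_{L^{2}}^{2}$ as a function of the frequency. Uniqueness together with the implicit function theorem applied to the radial equation shows that $\omega\mapsto\phi_\omega$ is $C^{1}$, so $\rho$ is continuous; the endpoints are treated by rescaling. The substitution $\phi_\omega(x)=\sqrt{\omega}\,\psi_\omega(\sqrt{\omega}\,x)$ turns \eqref{eq:solitondef} into the perturbed cubic equation $-\tfrac12\Delta\psi_\omega+\psi_\omega-\psi_\omega^{3}+\omega\,\psi_\omega^{5}=0$, for which $\psi_\omega\to Q$ as $\omega\to0^{+}$, while the two-dimensional $L^{2}$-scaling gives the exact identity $\rho(\omega)=\|\psi_\omega\|_{L^{2}}^{2}$; hence $\rho(\omega)\to\|Q\|_{L^{2}}^{2}$. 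As $\omega\to(\tfrac{3}{16})^{-}$ the degenerating potential $H$ forces $\phi_\omega$ to develop a plateau at the critical amplitude $\tfrac{\sqrt3}{2}$ on an expanding region, so that $\rho(\omega)\to+\infty$. By the intermediate value theorem $\rho$ is onto $]\,\|Q\|_{L^{2}}^{2},+\infty[$, which produces a ground state of any prescribed mass $\rho>\|Q\|_{L^{2}}^{2}$.

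The stability in (3) I would settle through the Grillakis--Shatah--Strauss / Vakhitov--Kolokolov criterion \cite{GSS87}. Since $S=E+\omega M$, the function $d(\omega)=S(\phi_\omega)$ satisfies $d'(\omega)=\rho(\omega)$, and once the spectral hypotheses for ground states are verified (a single negative eigenvalue of the linearized operator, kernel generated only by the phase and translation invariances) orbital stability is equivalent to $d''(\omega)=\rho'(\omega)>0$. Near $\omega=0$ a first-order perturbation of $-\tfrac12\Delta\psi+\psi-\psi^{3}=0$ by the defocusing term $\omega\psi^{5}$ gives $\rho'(\omega)>0$, hence stability on some $]0,\omega_{0}[$; near $\omega=\tfrac{3}{16}$ the divergence $\rho(\omega)\to+\infty$ together with the plateau asymptotics gives $\rho'(\omega)>0$, hence stability on some $]\omega_{1},\tfrac{3}{16}[$. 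No information on the sign of $\rho'$ is asserted on the middle interval, which is exactly why only $\omega_{0}\le\omega_{1}$ is claimed.

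Finally, for (4) I would run the Cazenave--Lions concentration--compactness scheme \cite{CaLi82} for \eqref{eq:8.3.5}. The admissible scaling $u_\lambda(x)=\lambda u(\lambda x)$ preserves the mass and gives $E(u_\lambda)=\tfrac{\lambda^{2}}{2}\bigl(\|\nabla u\|_{L^{2}}^{2}-\|u\|_{L^{4}}^{4}\bigr)+\tfrac{\lambda^{4}}{3}\|u\|_{L^{6}}^{6}$; choosing for $u$ the optimizer of \eqref{eq:GNsharp} rescaled to mass $\rho$ makes the bracket negative as soon as $\rho>\|Q\|_{L^{2}}^{2}$, so letting $\lambda\to0^{+}$ shows $I(\rho):=\inf_{\Gamma(\rho)}E<0$ and rules out vanishing. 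Excluding dichotomy requires the strict subadditivity $I(\rho)<I(a)+I(\rho-a)$ for $0<a<\rho$, after which concentration--compactness yields that every minimizing sequence is, up to translation, relatively compact; thus $\mathcal E(\rho)\neq\emptyset$, and applying the same compactness to the sequence $u_{n}(t_{n})$ coming from a hypothetical destabilizing trajectory gives $\mathcal E(\rho)$-orbital stability in the usual way. The step I expect to be the main obstacle is precisely this strict subadditivity: because the cubic--quintic nonlinearity is \emph{not} homogeneous, the one-line scaling proof available for pure-power problems breaks down, and one must exploit by hand the strictly $L^{2}$-supercritical quintic term to forbid the splitting of mass; the companion difficulty is the fine description of $\phi_\omega$ near $\omega=\tfrac{3}{16}$ that underlies both the surjectivity in (1) and the sign of $\rho'$ in (3).
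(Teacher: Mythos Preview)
Your overall architecture matches the paper closely for the existence, uniqueness, mass-surjectivity, and Grillakis--Shatah--Strauss parts (items (1)--(3)): the paper uses Berestycki--Lions/BGK for existence, Jang (rather than Serrin--Tang) for uniqueness in $d=2$, the same $L^2$-invariant rescaling $\psi_\omega(x)=\omega^{-1/2}\phi_\omega(\omega^{-1/2}x)$ and implicit function theorem near $\omega=0$, and Lewin--Rota~Nodari for the blow-up of $M(\phi_\omega)$ near $\tfrac{3}{16}$, together with the spectral input from \cite{KOPV17,LewinRotaNodari2015} to run GSS. One caveat: $\rho(\omega)\to\infty$ alone does not give $\rho'(\omega)>0$ near the endpoint; the paper instead quotes the sharper statement $M(\phi_\omega)+\partial_\omega M(\phi_\omega)\to\infty$ from \cite{LewinRotaNodari-p}.

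The one substantive divergence is in (4), precisely at the step you flag as the main obstacle. You propose to rule out dichotomy via the strict subadditivity $I(\rho)<I(a)+I(\rho-a)$, and you correctly note that the non-homogeneity of the nonlinearity kills the standard multiplicative scaling proof. The paper bypasses subadditivity altogether, following an idea of Colin--Jeanjean--Squassina \cite{CoJeSq10}: given dichotomy pieces $v_k,w_k$ with masses tending to $\theta\rho$ and $(1-\theta)\rho$, one \emph{dilates} them back to mass $\rho$ via $\tilde v_k(x)=v_k(\lambda_k^{-1/2}x)$ with $\lambda_k=\rho/\|v_k\|_{L^2}^2$. In $d=2$ this scaling leaves all the $L^p$ potential terms unchanged and only rescales the gradient, yielding the exact identity
\[
E(v_k)=\tfrac{1}{\lambda_k}E(\tilde v_k)+\tfrac{1-\lambda_k^{-1}}{2}\|\nabla v_k\|_{L^2}^2\ge -\tfrac{\nu}{\lambda_k}+\tfrac{1-\lambda_k^{-1}}{2}\|\nabla v_k\|_{L^2}^2,
\]
and similarly for $w_k$. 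Summing, using $\lambda_k^{-1}+\mu_k^{-1}\to 1$ and the Gagliardo--Nirenberg lower bound $\|\nabla v_k\|_{L^2}^2\gtrsim\|v_k\|_{L^4}^4/\|v_k\|_{L^2}^2$, one gets $\liminf\bigl(E(v_k)+E(w_k)\bigr)\ge -\nu+c>-\nu$, contradicting the dichotomy inequality $\limsup\bigl(E(v_k)+E(w_k)\bigr)\le-\nu$ directly. So what you identify as the hard step is in fact avoidable; the dilation trick exploits the $L^2$-criticality of the scaling in $d=2$ and is considerably cheaper than establishing strict subadditivity for the cubic--quintic energy.
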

  We emphasize the fact that for any mass {\it strictly larger} than that
  of the cubic ground state $Q$, we can find a soliton of the cubic-quintic NLS, while for a
  mass less or equal to that of $Q$, all solutions to \eqref{eq:nls} are asymptotically
  linear. This is in sharp contrast with the analogous situation in the case of a single pure power nonlinearity, where the critical sphere (in $L^2$ or other homogeneous
  Sobolev spaces) {\it always contains non-dispersive elements}, see
  e.g. \cite{DuHoRo08,KeMe06,Keraani06,TVZ08}.
  \smallbreak

  In view of numerical experiments showing that $\omega\mapsto
  M(\phi_\omega)$ is an increasing map, and of the analogy with the
  one-dimensional case, it is natural to conjecture that $ \omega_0
  =\tfrac{3}{16}$, that is, all ground states are orbitally stable, see also
  \cite{LewinRotaNodari-p}. Moreover, we expect that all ground states are energy minimizers.

  
\subsection{Three-dimensional case}

In $d=3$, equation \eqref{eq:nls} has already been studied in \cite{KOPV17}. However, no statement concerning the (in-)stability of solitary waves is given in there. 
Here, we shall state the following proposition, the proof of which relies on elements already present in \cite{KOPV17}:
\begin{proposition}[Soliton (in-)stability in 3D]\label{prop:soliton3D}
  Let $d=3$. For all $\omega\in ]0,\tfrac{3}{16}[$, there exists a ground
  state solution $\phi_\omega$ which is unique, up to translation and
  multiplication by $e^{i\theta}$, for constant $\theta\in \R$. Moreover:
\begin{enumerate}
    \item There exists  $\rho_0>0$ such that $M(\phi_\omega)\ge
      \rho_0$ for all $\omega  \in ]0,\tfrac{3}{16}[$.
    \item If $u_0\in H^1(\R^3)$ is such that $M(u_0)<\rho_0$, then
      the solution $u\in C(\R;H^1(\R^3))$ to \eqref{eq:nls} with
      $u_{\mid t=0}=u_0$ is asymptotically linear, i.e. there exists
      $u_\pm\in H^1(\R^3)$ such that
      \begin{equation*}
        \|e^{-i\frac{t}{2}\Delta} u(t,\cdot)-  u_\pm\|_{H^1}=\|u(t,\cdot)- e^{i\frac{t}{2}\Delta} u_\pm\|_{H^1}  \Tend t
        {\pm \infty} 0.
      \end{equation*}
    \item There exists $0<\omega_0<\tfrac{3}{16}$ such that for all
      $0<\omega<\omega_0$, $\phi_\omega$ is unstable.
      \item There exists $\omega_0\le \omega_1<\tfrac{3}{16} $ such that for all
      $\omega_1<\omega<\tfrac{3}{16}$, $\phi_\omega$ is orbitally
      stable.
      \item There exists $\rho_1>\rho_0$ such that for $\rho\ge \rho_1$,
solitary waves are  $\mathcal E(\rho)$-orbitally stable.  
    \end{enumerate}
  \end{proposition}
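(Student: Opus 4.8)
The plan is to establish each of the five assertions by specializing known results, primarily from \cite{KOPV17}, to the three-dimensional cubic-quintic setting, supplemented by scattering theory and Grillakis--Shatah--Strauss analysis. For existence and uniqueness of the ground state $\phi_\omega$ for every $\omega\in]0,\tfrac{3}{16}[$, I would invoke the variational construction and uniqueness results already developed in \cite{KOPV17}; uniqueness up to the symmetries of the equation follows from the nondegeneracy of the linearized operator around $\phi_\omega$. The symmetries (phase multiplication and translation) are exactly those appearing in Definition~\ref{def:stability}.

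For assertion (1), the existence of a mass floor $\rho_0>0$, I would argue that the map $\omega\mapsto M(\phi_\omega)$ is continuous on $]0,\tfrac{3}{16}[$ and examine its behavior at the endpoints. As $\omega\to\tfrac{3}{16}$ the ground state should approach a nontrivial profile (the quintic term becomes dominant near the threshold), while as $\omega\to 0$ one expects $M(\phi_\omega)$ to remain bounded below because in $d=3$ the cubic nonlinearity is $L^2$-supercritical, so there is a nonzero minimal mass for existence; setting $\rho_0=\inf_\omega M(\phi_\omega)$ then gives a strictly positive lower bound. Assertion (2) is then a scattering statement below the ground-state mass: any solution with $M(u_0)<\rho_0$ has mass strictly below that of every soliton, so no soliton can be a profile for concentration, and the concentration-compactness/rigidity scheme of \cite{KOPV17} (itself adapted from the Kenig--Merle program for the energy-critical defocusing quintic NLS) yields global existence and scattering in $H^1(\R^3)$, hence the claimed asymptotic linearity at both time infinities.

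For the stability dichotomy in assertions (3) and (4), I would apply the Grillakis--Shatah--Strauss criterion \cite{GSS87}: the stability of $e^{i\omega t}\phi_\omega$ is governed by the sign of $\frac{d}{d\omega}M(\phi_\omega)$ (equivalently, the convexity of the function $\omega\mapsto S_\omega(\phi_\omega)$ where $S_\omega$ is the action at frequency $\omega$), provided the spectral hypotheses on the linearized Hamiltonian hold. Instability for small $\omega$ (assertion (3)) should follow from a slope computation showing $\frac{d}{d\omega}M(\phi_\omega)<0$ near $\omega=0$, consistent with the $L^2$-supercritical character of the cubic term (mirroring the $p>5$ instability in Ohta's one-dimensional result quoted in the introduction); stability for $\omega$ near $\tfrac{3}{16}$ (assertion (4)) should follow from the opposite sign of the slope near the upper threshold. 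I would extract the relevant monotonicity and spectral information from the analysis already carried out in \cite{KOPV17}, so that defining $\omega_0$ as the supremum of the instability region and $\omega_1$ as the infimum of the upper stability region gives $\omega_0\le\omega_1$.

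Assertion (5) concerns $\mathcal E(\rho)$-orbital stability for large mass, and here I would use the Cazenave--Lions variational approach \cite{CaLi82}: for $\rho$ large enough, the minimization problem \eqref{eq:8.3.5} admits a minimizer, and the compactness of minimizing sequences (modulo translations, via concentration-compactness) yields stability of the minimizer set $\mathcal E(\rho)$ under the flow. The existence of a threshold $\rho_1>\rho_0$ ensuring a minimizer should again be read off from \cite{KOPV17}, where the energy functional restricted to the mass sphere is shown to be bounded below and coercive for sufficiently large mass. The main obstacle, I expect, will be verifying the precise spectral hypotheses of \cite{GSS87} for the three-dimensional, genuinely inhomogeneous cubic-quintic nonlinearity --- in particular establishing nondegeneracy and the correct Morse index of the linearized operator uniformly in $\omega$ near each endpoint --- together with pinning down the sign of $\frac{d}{d\omega}M(\phi_\omega)$ rigorously rather than numerically; it is precisely this difficulty that forces the conclusions to be stated with the implicitly defined thresholds $\omega_0,\omega_1,\rho_1$ rather than explicit values.
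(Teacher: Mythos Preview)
Your proposal is correct and follows essentially the same approach as the paper: existence/uniqueness of $\phi_\omega$ and items (1)--(2) are deferred to \cite{KOPV17}, items (3)--(4) are obtained from the Grillakis--Shatah--Strauss criterion after verifying the spectral assumption (done in \cite{KOPV17,LewinRotaNodari2015}) and the sign of $\partial_\omega M(\phi_\omega)$ near the two endpoints, and item (5) follows from the Cazenave--Lions concentration-compactness argument once the constrained energy infimum is shown to be negative for large $\rho$ (again via \cite{KOPV17}). One small sharpening worth noting: as $\omega\to 0$ the mass $M(\phi_\omega)$ does not merely remain bounded below but in fact diverges like $M(Q)/\sqrt{\omega}$, and it is precisely this blow-up that forces $\partial_\omega M(\phi_\omega)<0$ for small $\omega$ and hence the instability in (3).
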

 
The minimal mass $\rho_0$ is related to a specific ground state $\phi$, which,
unlike in the 2D case, cannot be directly described as the solution to
some differential equation, but rather as the optimizer of a suitable Weinstein
functional. More precisely, $\rho_0= M(\phi)$ where
\begin{equation*}
  \phi= \inf_{u\in
    H^1(\R^3)\setminus\{0\}}\frac{\|u\|_{L^2}\|u\|_{L^6}^{3/2}\|\nabla
    u\|_{L^2}^{3/2}}{\|u\|_{L^4}^4} ,
\end{equation*}
see \cite{KOPV17} for more details (and, in particular, for the proof of items (1) and (2) in the last proposition). 
One again expects the equality $\omega_0=\omega_1$ to hold. More precisely,
Conjecture~2.3 from \cite{KOPV17} (see also \cite{LewinRotaNodari-p}),
which is supported by numerics, states: 
\begin{conjecture}
  There exists $0<\omega_*<\tfrac{3}{16} $ so that $\omega\mapsto M(\phi)$ is
  strictly decreasing for $\omega<\omega_*$, and strictly increasing
  for $\omega>\omega_*$. 
\end{conjecture}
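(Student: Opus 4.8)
The plan is to recast the conjecture as a Vakhitov--Kolokolov type statement on the sign of $M'(\omega)=\tfrac{d}{d\omega}\|\phi_\omega\|_{L^2}^2$ and to reduce it to a single convexity estimate. Writing $A(\omega)=\|\nabla\phi_\omega\|_{L^2}^2$, $B(\omega)=M(\phi_\omega)$, $C(\omega)=\|\phi_\omega\|_{L^4}^4$, $D(\omega)=\|\phi_\omega\|_{L^6}^6$, I would first combine the Nehari identity (test \eqref{eq:solitondef} against $\bar\phi_\omega$) with the Pohozaev identity (test against $x\cdot\nabla\bar\phi_\omega$) in $d=3$. Solving the resulting linear system yields the rigid relations $C=4\omega B$ and $D=3\omega B-\tfrac12 A$, whence the action satisfies $S(\phi_\omega)=\tfrac13 A(\omega)$. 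Since $\phi_\omega$ is a critical point of $S_\omega=E+\omega M$, the envelope theorem gives $d(\omega):=S(\phi_\omega)$ with $d'(\omega)=M(\omega)$ and $d''(\omega)=M'(\omega)$; equivalently $A'(\omega)=3M(\omega)$ and $M'(\omega)=\tfrac13 A''(\omega)$. Thus the conjecture is exactly the statement that $d(\omega)=\tfrac13\|\nabla\phi_\omega\|_{L^2}^2$ possesses a single inflection point, being strictly concave on $(0,\omega_*)$ and strictly convex on $(\omega_*,\tfrac{3}{16})$, i.e. that $M'$ changes sign exactly once and from $-$ to $+$.

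Next I would pin down the boundary behaviour to guarantee at least one interior minimum. As $\omega\to0^+$ the quintic term is lower order and, after the rescaling $\phi_\omega(x)\approx\sqrt{\omega}\,R(\sqrt{\omega}\,x)$ with $R$ the cubic ground state of $-\tfrac12\Delta R+R-R^3=0$, one gets $M(\omega)\sim\|R\|_{L^2}^2\,\omega^{-1/2}\to+\infty$, so $M'(\omega)<0$ near $0$. As $\omega\to(\tfrac{3}{16})^-$ the profile converges to the flat-top/compacton state of amplitude $\sqrt{3}/2$ (the double root of $W(\phi)=-\omega\phi^2+\tfrac12\phi^4-\tfrac13\phi^6$ at $\omega=\tfrac{3}{16}$) supported on a ball of radius $R_\omega\to+\infty$, giving $M(\omega)\to+\infty$ with $M'(\omega)>0$. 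Rather than carry out these matched asymptotics in full, I would instead invoke items (3)--(4) of Proposition~\ref{prop:soliton3D} together with the Grillakis--Shatah--Strauss criterion $d''=M'$, which already force $M'<0$ near $0$ and $M'>0$ near $\tfrac{3}{16}$, and hence, by the intermediate value theorem, the existence of at least one transition point $\omega_*$.

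The crux is uniqueness of the transition, i.e. that $M'$ has no further zero. The cleanest sufficient condition is $M''(\omega)>0$ at every $\omega$ with $M'(\omega)=0$: then each critical point of $M$ is a strict local minimum, and a continuous function cannot have two strict local minima on an interval without an interposed local maximum, so there is at most one. Differentiating twice and using $\psi:=\partial_\omega\phi_\omega=-L_+^{-1}\phi_\omega$, where $L_+=-\tfrac12\Delta+\omega-3\phi_\omega^2+5\phi_\omega^4$ is the real linearized operator, leads to the exact identity
\[ M''(\omega)=6\|\psi\|_{L^2}^2-12\int_{\R^3}\phi_\omega\,\psi^3\,dx+40\int_{\R^3}\phi_\omega^3\,\psi^3\,dx. \]
The difficulty is that the cubic cross terms are sign-indefinite, so $M''$ is not manifestly positive, which is precisely why the statement is only conjectural. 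To make progress I would pass to the radial ODE and argue by shooting: parametrize the unique positive radial profile by $\omega$, view $\psi$ as the decaying solution of the linearized ODE $L_+\psi=-\phi_\omega$, and control the number of sign changes of $\psi$ (equivalently the sign of $\langle L_+^{-1}\phi_\omega,\phi_\omega\rangle=-\tfrac12 M'$) by a Sturm-type comparison that is monotone in $\omega$. The main obstacle---and the reason I expect this step to resist a fully elementary treatment---is the absence of any closed-form profile or extra scaling symmetry in $d=3$: one must quantify the spectral behaviour of $L_+$ uniformly across the whole interval $(0,\tfrac{3}{16})$, interpolating between the cubic-ground-state regime and the compacton regime, and it is this uniform spectral control (supported by the numerics of \cite{KOPV17}) that constitutes the genuine gap.
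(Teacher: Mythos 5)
The statement you were asked to prove is, in the paper, precisely that: a \emph{conjecture} (Conjecture~2.3 of [KOPV17], restated here), supported only by numerics; the paper supplies no proof, and indeed emphasizes that even near $\omega=\tfrac{3}{16}$ the two-sided bound $(\tfrac{3}{16}-\omega)^{-3}\lesssim M(\phi_\omega)\lesssim(\tfrac{3}{16}-\omega)^{-3}$ ``does not rule out possible oscillations of $M(\phi_\omega)$''. So there is no paper proof to compare against, and your proposal should be judged as an attempt at an open problem. To your credit, the parts of your argument that can be checked are correct: the Nehari/Pohozaev system in $d=3$ does give $C=4\omega B$ and $D=3\omega B-\tfrac12 A$ (this is exactly the identity $\tfrac12\int\phi^4=2\omega\int\phi^2$ derived in Proposition~3.1), hence $S(\phi_\omega)=\tfrac13\|\nabla\phi_\omega\|_{L^2}^2$; the envelope relations $d'(\omega)=M(\phi_\omega)$, $d''=M'$ are standard Grillakis--Shatah--Strauss facts; the boundary asymptotics match Proposition~4.1 ($M\sim\omega^{-1/2}M(Q)$ as $\omega\to0$, amplitude $\sqrt3/2$ at $\omega=\tfrac{3}{16}$, consistent with \eqref{eq:phiLinfini}); and your identity $M''=6\|\psi\|_{L^2}^2-12\int\phi_\omega\psi^3+40\int\phi_\omega^3\psi^3$ is verified by differentiating $L_+\psi=-\phi_\omega$ once more, which gives $L_+\partial_\omega\psi=-2\psi+6\phi_\omega\psi^2-20\phi_\omega^3\psi^2$ and then $M''=2\|\psi\|^2+2\langle\phi_\omega,\partial_\omega\psi\rangle$ via self-adjointness. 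One caveat on the existence of a transition point: deducing the sign of $M'$ near the endpoints from items (3)--(4) of Proposition~1.8 is mildly circular (those items were obtained \emph{from} the monotonicity of $M$ near the endpoints via GSS theory, and stability only yields $M'\ge0$, not $M'>0$), though real analyticity of $\omega\mapsto\phi_\omega$ together with Proposition~4.1 repairs this.

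The genuine gap is exactly where you locate it, and it is the whole content of the conjecture: \emph{uniqueness} of the sign change of $M'$. Your reduction (``$M''>0$ at every zero of $M'$'' forces at most one critical point) is logically sound, but nothing in the attempt controls the sign-indefinite cubic terms in the $M''$ identity, and the proposed Sturm-comparison route does not obviously make sense: Sturm theory counts zeros of solutions of a single radial ODE, whereas what must be controlled is the sign of the quadratic form $\langle L_+^{-1}\phi_\omega,\phi_\omega\rangle=-\tfrac12 M'(\omega)$ for an operator $L_+(\omega)$ with one negative radial eigenvalue (Proposition~4.3) and no radial kernel (Proposition~4.4); since the family $\omega\mapsto L_+(\omega)$ carries no monotone structure interpolating between the cubic-ground-state regime and the flat-top regime, no comparison ``monotone in $\omega$'' is available, and this uniform spectral control is precisely what is missing in the literature. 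In short: your proposal is an honest and technically competent \emph{program}, with all verifiable identities correct, but it does not prove the statement --- as you yourself acknowledge --- and neither does the paper, which presents it as open.
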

If this indeed holds true, one can take $\omega_0=\omega_1=\omega_*$ in
Proposition~\ref{prop:soliton3D}.
\smallbreak

One may also wonder about the precise nature of instability. Recall, that in the
case of a {\it single} power nonlinearity, instability is always due to the possibility of finite-time blow-up (see e.g. \cite{CazCourant} and
references therein). Very recently, Fukuya and Hayashi \cite{FuHa-p} 
have established instability results for NLS with a double power
nonlinearity, but in their work the {\it focusing term dominates} the defocusing one (thereby extending the results of \cite{CMZ16}). 
They rely on the possibility of 
blow-up or invoke the 
Grillakis-Shatah-Strauss theory, in which case the nature of the
instability still remains unclear. For nonlinearly coupled \emph{systems} of NLS, Correia, Oliveira and Silva
\cite{CoOlSi-p}  have shown that instability may correspond to a
transfer of mass from one equation to the other. None of these former results, however, apply to our situation. 

In our case, one may expect that the
stable manifolds analyzed in \cite{KriegerSchlag06,Schlag09} become
open neighborhoods, in the sense that in a full neighborhood of the
unstable ground state (not only in a manifold with limited
co-dimension), the solution $u$ bifurcates from the solitary wave
$e^{i\omega t}\phi(x)$, yielding a behavior of the for
\begin{equation*}
  u(t,x) = W(t,x) + e^{i\frac{t}{2}\Delta}u_+(x) + o_{L^2}(1)\quad
  \text{as }t\to\infty,
\end{equation*}
for some $u_+\in L^2(\R^3)$, and where $W$ is a (possibly different)
ground state, modulated by a moving set of parameters (see
\cite{KriegerSchlag06,Schlag09} for details). Typically, if $W=0$, the
solution becomes fully dispersive, while if $u_+=0$, $u$ behaves like a
solitary wave. 
\smallbreak

The rest of this paper is now organized as follows: In
Section~\ref{sec:dispersive}, we prove
Theorem~\ref{theo:2Ddispersion}. In Section~\ref{sec:soliton}, we
analyze some general results on solitary waves for \eqref{eq:nls}. In
Section~\ref{sec:further}, we recall further properties of the ground
states related to the Grillakis-Shatah-Strauss theory. These will imply all
the points in Theorem~\ref{theo:2Dstability} except the fourth one, as well as 
items (3) and (4) of Proposition~\ref{prop:soliton3D}. The
proof of the remaining statements on $\mathcal E(\rho)$-stability within
Theorem~\ref{theo:2Dstability} and
Proposition~\ref{prop:soliton3D} is given in
Section~\ref{sec:cazenave-lions}.

\subsubsection*{Acknowledgements} We are grateful to Mathieu Lewin and
Simona Rota Nodari for stimulating discussions and for an early view
of their results from \cite{LewinRotaNodari-p}. 


\section{Dispersive behavior in 2D}
\label{sec:dispersive}

\subsection{Space-time norms} In this section, our main goal is to prove Theorem~\ref{theo:2Ddispersion}. Recall that
for two-dimensional Schr\"odinger equation, a {\it Strichartz-pair} $(q,r)$ is
\emph{admissible} if
\begin{equation*}
  \frac{2}{q}+\frac{2}{r} =1,\quad 2\le r<\infty.
\end{equation*}
We denote by
\begin{equation*}
  \|u\|_{S(I)} = \sup_{(q,r)\text{ admissible}}\|u\|_{L^q(I;L^r(\R^2))}.
\end{equation*}
In view of \cite[Theorem~1.3]{TaoVisanZhang07}, to prove
Theorem~\ref{theo:2Ddispersion} with $\Sigma$ replaced by the larger
space $H^1(\R^2)$, it suffices to prove that for any $u_0\in
H^1(\R^2)$ with $\|u_0\|_{L^2}\le \|Q\|_{L^2}$, the global solution $u$
provided by Proposition~\ref{prop:GWP} satisfies
\begin{equation}\label{eq:dispcond}
\|u\|_{S(\R)}+\|\nabla u\|_{S(\R)}<\infty.
\end{equation}
\begin{remark}Note that Theorem~\ref{theo:2Dstability} contains the particular information that one can
find $u_0\in H^1(\R^2)$ with $\|u_0\|_{L^2}- \|Q\|_{L^2}>0$ 
arbitrarily small, such that $\|u\|_{S(\R)}=\infty$.
\end{remark}

As a first, basic step, we show that \eqref{eq:dispcond} can be reduced to the following:

\begin{lemma}[Reduction step]\label{lem:reduc}
  Let $d=2$ and $u_0\in H^1(\R^2)$. If the global solution provided by
  Proposition~\ref{prop:GWP} satisfies 
  $\|u\|_{S(\R)}<\infty$, then we also have
  \[ \|\nabla u\|_{S(\R)}<\infty,\]
  and so $u$ is asymptotically linear,
  \[ \exists u_\pm \in H^1(\R^2),\quad
    \left\|u(t)-e^{i\frac{t}{2}\Delta}u_\pm\right\|_{H^1(\R^2)} \Tend t {\pm
      \infty} 0.\]
  If in addition $u_0\in \Sigma$, then $u_\pm \in \Sigma$ and
  \[ 
    \left\|e^{-i\frac{t}{2}\Delta}u(t)-u_\pm\right\|_{H^1(\R^2)} \Tend t {\pm
      \infty} 0.\]
\end{lemma}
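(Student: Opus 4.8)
\emph{Setup and strategy.} Write the nonlinearity as $N(u)=-|u|^2u+|u|^4u$, so that \eqref{eq:nls} reads $i\d_t u+\tfrac12\Delta u=N(u)$ and Duhamel's formula takes the form
\begin{equation*}
  u(t)=e^{i\frac t2\Delta}u_0-i\int_0^t e^{i\frac{t-s}2\Delta}N(u(s))\,ds.
\end{equation*}
The plan is the standard ``small-norm partition plus Strichartz bootstrap'' argument. Since $\|u\|_{S(\R)}<\infty$, I would first fix a small $\eta>0$ and split $\R$ into finitely many intervals $I_1,\dots,I_n$ with $\|u\|_{S(I_j)}\le\eta$ on each. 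On a given $I_j=[t_{j-1},t_j]$, applying $\nabla$ to Duhamel and using the inhomogeneous Strichartz estimate gives
\begin{equation*}
  \|\nabla u\|_{S(I_j)}\lesssim \|\nabla u(t_{j-1})\|_{L^2}+\|\nabla N(u)\|_{L^{q'}_tL^{r'}_x(I_j)},
\end{equation*}
for suitable admissible $(q,r)$. The cubic part is $L^2$-critical in $2$D: estimating $|\nabla(|u|^2u)|\lesssim|u|^2|\nabla u|$ in the dual pair $(4/3,4/3)$ and distributing via Hölder in the admissible pair $(4,4)$ yields $\||u|^2\nabla u\|_{L^{4/3}_{t,x}}\lesssim\|u\|_{L^4_{t,x}(I_j)}^2\,\|\nabla u\|_{L^4_{t,x}(I_j)}\lesssim\|u\|_{S(I_j)}^2\|\nabla u\|_{S(I_j)}$, which is exactly of absorbable form thanks to the smallness of $\|u\|_{S(I_j)}$.

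\emph{The quintic term and summation.} The quintic term $|\nabla(|u|^4u)|\lesssim|u|^4|\nabla u|$ is $L^2$-supercritical but strictly $H^1$-subcritical in $2$D, so there is room to close the estimate. I would control it by putting $\nabla u$ in one admissible pair, four copies of $u$ in a second (admissible) space-time norm, and using the Gagliardo--Nirenberg bound $\|u(t)\|_{L^p}\lesssim\|u(t)\|_{H^1}\le C$ (valid for all $p<\infty$, with $C$ uniform by conservation of mass and energy, Proposition~\ref{prop:GWP}) to interpolate and extract a strictly positive power $\theta>0$ of $\|u\|_{S(I_j)}$, giving a bound $\lesssim\|u\|_{S(I_j)}^{\theta}\,C^{4-\theta}\,\|\nabla u\|_{S(I_j)}$. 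For $\eta$ small this is absorbed into the left-hand side, whence $\|\nabla u\|_{S(I_j)}\lesssim\|\nabla u(t_{j-1})\|_{L^2}\le C$; summing over the finitely many $I_j$ yields $\|\nabla u\|_{S(\R)}<\infty$. Selecting the admissible exponents so that (i) $\nabla u$ sits in a genuine $S$-norm, (ii) a positive power of $\|u\|_{S}$ is freed for smallness, and (iii) the remaining factors are bounded purely by the energy, is the one genuinely fiddly computation, and is the step I expect to require the most care.

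\emph{Asymptotic linearity in $H^1$.} With $\|u\|_{S(\R)}+\|\nabla u\|_{S(\R)}<\infty$, I would define the scattering states by the convergent Duhamel integrals
\begin{equation*}
  u_\pm=u_0-i\int_0^{\pm\infty}e^{-i\frac s2\Delta}N(u(s))\,ds\in H^1(\R^2).
\end{equation*}
Indeed, applying the Strichartz estimate to the tail $\int_t^{\pm\infty}$ and using that $\|u\|_{S([t,\pm\infty))}+\|\nabla u\|_{S([t,\pm\infty))}\to0$ shows $\|e^{-i\frac t2\Delta}u(t)-u_\pm\|_{H^1}\to0$, which by unitarity of $e^{i\frac t2\Delta}$ on $H^1$ is the asserted convergence.

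\emph{Upgrade to $\Sigma$.} Note first that the displayed $\Sigma$-statement is phrased in the $H^1$ norm, so the only new content is membership $u_\pm\in\Sigma$, i.e.\ $x\,u_\pm\in L^2$. Here I would introduce the Galilei operator $J(t)=x+it\nabla=e^{i\frac t2\Delta}\,x\,e^{-i\frac t2\Delta}$, which commutes with the free flow and, crucially, acts as a derivation on the gauge-invariant nonlinearity, $|J(t)N(u)|\lesssim|u|^2|J(t)u|+|u|^4|J(t)u|$. Since $u\in C(\R;\Sigma)$ by Proposition~\ref{prop:GWP} and $J(0)u_0=xu_0\in L^2$, the exact same bootstrap as above (with $J(t)u$ in place of $\nabla u$, using the identical algebraic structure of the nonlinear term and the smallness of $\|u\|_{S(I_j)}$) gives $\|J(\cdot)u\|_{S(\R)}<\infty$, and then the Duhamel tail argument shows that $e^{-i\frac t2\Delta}J(t)u(t)=x\,e^{-i\frac t2\Delta}u(t)$ converges in $L^2$. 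Combining this with the already-established $L^2$-convergence $e^{-i\frac t2\Delta}u(t)\to u_\pm$ and closedness of the multiplication operator $x$ on $L^2$ yields $x u_\pm\in L^2$, hence $u_\pm\in\Sigma$, completing the proof.
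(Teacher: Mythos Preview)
Your proposal is correct and follows essentially the same route as the paper: Duhamel plus Strichartz with the small-interval partition, the cubic term handled via the $(4,4)$ pair, the quintic term controlled by placing $\nabla u$ in an admissible norm and interpolating the four copies of $u$ between an admissible Strichartz norm and $L^\infty_t H^1_x$ (the paper makes this explicit with $\|u\|_{L^{12}_tL^6_x}^4\le\|u\|_{L^\infty_tL^6_x}^3\|u\|_{L^3_tL^6_x}$, yielding your $\theta=1$), and the $\Sigma$-upgrade via the Galilei operator $J(t)=x+it\nabla$. The paper invokes the factorization $J(t)u=it\,e^{i|x|^2/(2t)}\nabla(e^{-i|x|^2/(2t)}u)$ rather than stating the derivation property directly, but these are equivalent for gauge-invariant nonlinearities, so there is no substantive difference.
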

\begin{proof}
  From \cite{TaoVisanZhang07}, we only have to check that
  $\|u\|_{S(\R)}<\infty$ implies $\|\nabla u\|_{S(\R)}<\infty$.
  Let $I=[t_0,t_1]$ be
some time interval, with $t_1\ge t_0\ge 0$ to simplify
notations. 
Considering the Duhamel's formula associated to \eqref{eq:nls}, taking
the gradient and applying Strichartz estimates, we find 
\begin{equation*}
 \|\nabla u\|_{S(I)}\lesssim \|\nabla u(t_0)\|_{L^2} + \left\|
   u^2\nabla u\right\|_{L^{4/3}(I\times \R^2)} + \left\|
   u^4\nabla u\right\|_{L^{3/2}(I;L^{6/5})} ,
\end{equation*}
where we have considered the specific admissible pairs $(4,4)$ and
$(3,6)$ for the cubic and quintic nonlinearities, respectively. Recall
that we already know that $u\in L^\infty(\R;H^1(\R^2))$, so the first
term on the right hand side is bounded uniformly in time. Write
\begin{equation*}
  \frac{3}{4} = \frac{2}{4}+\frac{1}{4},\quad
  \frac{2}{3} = \frac{4}{12}+\frac{1}{3}, \quad
 \frac{5}{6} = \frac{4}{6}+\frac{1}{6},
\end{equation*}
in which case, H\"older's inequality yields
\begin{align*}
 \|\nabla u\|_{S(I)}&\lesssim 1+ \left\|
   u\right\|_{L^{4}(I\times \R^2)}^2  \left\|
  \nabla  u\right\|_{L^{4}(I\times \R^2)}+ \left\|
   u\right\|_{L^{12}(I;L^{6})}^4 \left\| \nabla
                      u\right\|_{L^{3}(I;L^{6})} \\
  &\lesssim 1+ \left\|
   u\right\|_{L^{4}(I\times \R^2)}^2  \left\|
  \nabla  u\right\|_{S(I)}+ \left\|
   u\right\|_{L^{12}(I;L^{6})}^4 \left\| \nabla
                      u\right\|_{S(I)} .
\end{align*}
Recalling again that $u\in L^\infty(\R;H^1(\R^2))$,
\begin{equation*}
  \|u\|_{L^{12}(I;L^6)}^4\le
  \|u\|_{L^{\infty}(I;L^6)}^3\|u\|_{L^{3}(I;L^6)}\lesssim
  \|u\|_{L^{\infty}(I;H^1)}^3\|u\|_{S(I)}\le C \|u\|_{S(I)}.
\end{equation*}
Now since $\|u\|_{S(\R)}<\infty$, we can split $\R_t$ into finitely
many intervals on which the nonlinear terms in the above estimate can
be absorbed by the left hand side, so we conclude $\|\nabla
u\|_{S(\R)}<\infty$.
\smallbreak

Now if $u_0\in \Sigma$, we introduce the Galilean operator
\begin{equation*}
  J(t)=x+it\nabla,
\end{equation*}
which commutes with the free Schr\"odinger operator, i.e.
\begin{equation*}
  \big[J,i\d_t+\tfrac{1}{2}\Delta\big]=0.
\end{equation*}
Moreover,
\begin{equation}\label{eq:factor}
 J(t)u = it\, e^{i|x|^2/(2t)}\nabla\(u e^{-i|x|^2/(2t)}\),
\end{equation}
which implies that $J(t)u$ can be estimated like $\nabla u$ above. Thus 
$\|Ju\|_{S(\R)}<\infty$ and scattering in $\Sigma$ follows along the same lines as 
scattering in $H^1$ (see e.g. \cite{CazCourant}). 
\end{proof}

Next, in order to prove $\|u\|_{S(\R)}<\infty$ and thus \eqref{eq:dispcond}, we shall in the following distinguish between the case of subcritical mass, i.e. $\|u_0\|_{L^2}<\|Q\|_{L^2}$, and the critical case, 
where $\|u_0\|_{L^2}=\|Q\|_{L^2}$.


\subsection{Mass subcritical case} In this subsection, we suppose 
\begin{equation}\label{eq:subcrit}
  \|u_0\|_{L^2}^2= (1-\eta)\|Q\|_{L^2}^2,\quad \text{for some
  }0<\eta<1. 
\end{equation}
Since $u_0\in \Sigma$, we can rely on
the pseudo-conformal conservation law (derived initially in
\cite{GV79Scatt}, see also \cite{CazCourant}): 
\begin{equation}
  \label{eq:pseudoconf}
  \frac{d}{dt}\(\frac{1}{2}\|J(t) u\|_{L^2}^2
- \frac{ t^2}{2}\|u\|_{L^4}^{4} +\frac{ t^2}{3}\|u\|_{L^6}^{6} \) =
-\frac{2t}{3}\|u\|_{L^6}^6. 
\end{equation}
In view of \eqref{eq:factor}, we can rewrite 
\[
\|J(t) u\|_{L^2}^2 = t^2 \left\|\nabla\(u e^{-i|x|^2/(2t)}\)
\right\|_{L^2}^2 .
\]
The sharp Gagliardo--Nirenberg inequality \eqref{eq:GNsharp} when applied
to $u(t,x) e^{-i|x|^2/(2t)}$, then yields, together with \eqref{eq:subcrit}, that
\begin{align*}
  \|J(t) u\|_{L^2}^2
-  t^2\|u\|_{L^4}^{4} \ge &\, \|J(t)
u\|_{L^2}^2 -(1-\eta)\|J(t)u\|_{L^2}^2 \\
= &\, \eta \|J(t)
u\|_{L^2}^2.
\end{align*}
Hence, the pseudo-conformal conservation law implies
\begin{equation*}
 J(t) u\in L^\infty(\R_t;L^2(\R^2)).
\end{equation*}
Invoking \eqref{eq:factor} and general  Gagliardo--Nirenberg
inequalities, for $2\le r<\infty$,
\begin{equation}\label{eq:GNJ}
  \|u(t)\|_{L^r(\R^2)}\lesssim \|u(t)\|_{L^2(\R^2)}^{1-\theta}
  \(\frac{1}{t} \|J(t) u\|_{L^2}\)^{\theta},\quad \theta=1-\frac{2}{r},
\end{equation}
we infer $u\in L^q(\R;L^r(\R^2))$ for all admissible pairs,
i.e. $\|u\|_{S(\R)}<\infty$. 
\smallbreak

It was proved very recently in \cite{Cheng-p}, that the assumption $u_0\in \Sigma$ can
be relaxed to $u_0\in H^1(\R^2)$, by combining the Kenig--Merle roadmap
(\cite{KeMe06}, see also \cite{RaphaelBBK})
with a profile decomposition in
$L^2$. Such a decomposition arises naturally, even though the nonlinearity is not homogeneous. Indeed,
if the solution $u$ is dispersive, with fixed $L^2(\R^2)$ norm, we
expect it to behave like
\begin{equation*}
  |u(t,x)|\Eq t \infty\frac{1}{\lambda (t)}U\(\frac{x}{\lambda(t)}\),
\end{equation*}
with some fixed profile $U$ and some scaling factor $\lambda(t)\to
\infty$ as $t\to \infty$. In the case of 
scattering (i.e., asymptotically linear behavior), we have $\lambda (t)\approx
t$. Equivalence, however, is difficult to establish. In general,
for $\lambda(t)\to \infty$, we can only infer that the quintic
nonlinearity for large times $t$ becomes negligible when compared to the cubic one. The large time stability estimate in \cite{Cheng-p} relies
precisely on the solution to
\begin{equation}\label{eq:cubic}
  i\d_t v +\frac{1}{2}\Delta v=-|v|^2v,
\end{equation}
considered as a reference solution. Since this equation is
$L^2$-critical, we can follow \cite{MerleVega98} and expect a profile decomposition to arise at the $L^2$-level
(as opposed to $\dot H^s$-level for $s>0$, see, 
e.g. \cite{DuHoRo08,HoRo08,KeMe06}). The
solution to \eqref{eq:cubic} is guaranteed to be global in time and asymptotically linear if
$\|v\|_{L^2}<\|Q\|_{L^2}$. We emphasize that this property, 
conjectured for a long time, is actually hard to prove (see \cite{Dodson15}, and
\cite{PlanchonBBK} for the historical perspective),
and, hence, moving from the assumption $u_0\in \Sigma$ to $u_0\in H^1(\R^2)$
requires lots of technicalities. 

In the case
$\|u_0\|_{L^2}=\|Q\|_{L^2}$, the solution to the focusing cubic
Schr\"odinger equation may develop singularities, so the 
approach of \cite{Cheng-p} seems doomed. Supposing that we know that $u$ is dispersive, a critical aspect in our analysis below is to prove that
$\lambda(t)\approx t$.


\subsection{Mass critical case}\label{sec:mcrit}

Let now $u_0\in \Sigma$ with $\|u_0\|_{L^2}=\|Q\|_{L^2}$. The
pseudo-conformal conservation law yields 
\begin{equation}\label{eq:integratedpseudoconf}
   \underbrace{\frac{1}{2}\|J(t) u\|_{L^2}^2
- \frac{ t^2}{2}\|u\|_{L^4}^{4} }_{\ge 0\text{ from
\eqref{eq:GNsharp} and \eqref{eq:factor}}}+\frac{ t^2}{3}\|u\|_{L^6}^{6} 
=\frac{1}{2}\|x u_0\|_{L^2}^2
-\int_0^t\frac{2s}{3}\|u(s, \cdot)\|_{L^6}^6ds,
\end{equation}
hence
\begin{equation}\label{eq:L6control}
  \|u(t, \cdot)\|_{L^6}^{6} \lesssim \frac{1}{1+t^2}\ \text{and} \   \int_0^\infty t
  \|u(t, \cdot )\|_{L^6}^6dt<\infty.
\end{equation}
This does not rule out a behavior of the form
\begin{equation*}
   \|u(t, \cdot)\|_{L^6}^{6} \approx \frac{1}{t^2(\log t)^2}\quad \text{as
   }t\to \infty, 
 \end{equation*}
in which case $u\not \in L^3_tL^6_x$ (recall that $(3,6)$ is an admissible
pair). In other words, a direct use of the pseudo-conformal conservation law seems hopeless in the mass critical case, 
since we cannot access a convenient bound on $\|J(t)u\|_{L^2}$. 
In fact, we do not even have a moderate growth of this quantity, like
 $\O(t^\gamma)$ for some $\gamma<1/2$, as was
 exploited in \cite{TsutsumiYajima84}. 

 \begin{remark}
   Note that if the defocusing nonlinearity was weaker,
for instance quartic,
\begin{equation*}
  i\d_t u +\frac{1}{2}\Delta u =-|u|^2u+|u|^3u,\quad x\in \R^2,
\end{equation*}
then the same approach as above would yield
\begin{equation*}
   \|u(t, \cdot)\|_{L^5}^{5} \lesssim \frac{1}{1+t^2},
 \end{equation*}
 and so $u\in L^{10/3}_t L^5_x$, an admissible pair. We could then
 proceed as in the subcritical mass case. 
 \end{remark}
 We emphasize that in view of \eqref{eq:GNJ}, it suffices to show
 $Ju\in L^\infty(\R_t;L^2(\R^2))$, which is actually a stronger property
 than $\|u\|_{S(\R)}<\infty$. Suppose on the contrary that
 \begin{equation}
   \label{eq:JDV}
   \|J(t_n)u\|_{L^2(\R^2)}\Tend n \infty \infty \quad \text{for some
   }t_n\to \infty. 
 \end{equation}
 Consider $\psi$ given by
 \begin{equation}
   \label{eq:psi}
   \psi(t,x)=
   \frac{1}{t}u\(\frac{-1}{t},\frac{x}{t}\)e^{i|x|^2/(2t)},\quad t\not =0.
 \end{equation}
 As is well-known, this transform exchanges the large time and finite
 time r\'egimes. In the case of the $L^2$-critical nonlinear
 Schr\"odinger equation, $u$ and $\psi$ solve the same equation. In the
 present case, $\psi$ solves the non-autonomous equation
 \begin{equation*}
   i\d_t \psi +\frac{1}{2}\Delta \psi = -|\psi|^2\psi+t^2|\psi|^4\psi.
 \end{equation*}
 In view of \eqref{eq:factor} and \eqref{eq:JDV},
 \begin{equation}\label{eq:gradpsi}
   \|\nabla\psi(\tau_n, \cdot)\|_{L^2(\R^2)} =
   \left\|J\(\frac{-1}{\tau_n}\)u\right\|_{L^2(\R^2)} \Tend
   n\infty\infty,\quad\tau_n:=\frac{-1}{t_n}\Tend n
   \infty 0^-.
 \end{equation}
We obviously have $\|\psi(t)\|_{L^2}=\|u_0\|_{L^2}=\|Q\|_{L^2}$. We
use this property to rely on fine rigidity results associated to the
$L^2$-critical nonlinear Schr\"odinger equation (with critical mass),
as first proved in \cite{Merle93}, and revisited in
\cite{HmidiKeraaniIMRN}. More precisely, as will be proven 
below, \eqref{eq:JDV} implies
\begin{equation*}
  \rho_ne^{i\theta_n} \psi(\tau_n,\rho_nx+x_n) \Tend
  n\infty Q(x)\text{
    in }H^1(\R^2), 
\end{equation*}
for some $\rho_n,\theta_n\in \R$ and $x_n\in \R^2$. The crucial aspect is
that this convergence is strong in $H^1(\R^2)$. We then show that
$|\rho_n|\lesssim 
|\tau_n|$. This behavior rules out the first part of
\eqref{eq:L6control}, and hence
\eqref{eq:JDV} cannot hold.
\smallbreak

\noindent {\bf Step 1. A-priori estimates.} The idea of this proof by contradiction is to rely on fine properties
established for mass-critical, blowing-up solutions in the case of
$L^2$-critical nonlinear Schr\"odinger equations. We start with the fact that, in view of
\eqref{eq:integratedpseudoconf}, 
\begin{equation*}
  0\le \|J(t)u\|_{L^2}^2 -
  t^2\|u(t)\|_{L^4}^4\le \|x u_0\|_{L^2}^2. 
\end{equation*}
Hence, in terms of $\psi$, defined via \eqref{eq:psi}, we have
\begin{equation}\label{eq:Epsi}
  0\le \|\nabla \psi(t)\|_{L^2}^2 -
  \|\psi(t)\|_{L^4}^4\le \|x u_0\|_{L^2}^2.
\end{equation}
Next, we recall standard estimates on the virial quantity: The identity
\begin{equation*}
  \frac{d}{dt}\int_{\R^2}|x|^2|u(t,x)|^2dx = 2\IM\int_{\R^2}\bar
  u(t,x)x\cdot \nabla u(t,x)dx,
\end{equation*}
together with Cauchy-Schwarz, and the boundedness $u\in
L^\infty(\R;H^1(\R^2))$, yields
\begin{equation*}
  \int_{\R^2}|x|^2|u(t,x)|^2dx \lesssim 1+|t|,\quad \forall t\in \R.
\end{equation*}
In turn, \eqref{eq:psi} implies
\begin{equation}\label{eq:apviriel}
   \int_{\R^2}|x|^2|\psi(t,x)|^2dx \lesssim 1,\quad \forall t\in[-1,0[.
\end{equation}
Recall that the (conserved) energy
associated to the focusing cubic Schr\"odinger equation
\begin{equation*}
  i\d_t v +\frac{1}{2}\Delta v = -| v |^2 v,
\end{equation*}
is
\begin{equation*}
  E_{\rm cub} (v)=\|\nabla
  v \|_{L^2}^2-\| v\|_{L^4}^4. 
\end{equation*}
This will be used to prove a-priori
estimates on truncated versions of the virial quantity. Following \cite{HmidiKeraaniIMRN}, we introduce a nonnegative
radial $\chi \in C_0^\infty(\R^2)$, such that
\begin{equation*}
  \chi(x) =|x|^2\quad\text{if }|x|<1,\quad |\nabla \chi(x)|^2\lesssim \chi(x),
\end{equation*}
and for $\underline x\in \R^2$ and every $p\in \N^*$, we define
\begin{equation*}
  \chi_{p,\underline x}(x):=p^2\chi\(\frac{x-\underline x}{p}\),\quad g_{p,\underline
    x}(t):=\int_{\R^2}\chi_{p,\underline x}(x)|\psi(t,x)|^2dx.
\end{equation*}
The function $g_{p,\underline  x}$ is a truncated virial, centered at
point $\underline x\in \R^2$. A computation shows that
\begin{equation}\label{eq:gdot}
  \dot g_{p,\underline  x}(t) = 2\IM\int_{\R^2} \bar
  \psi(t,x)\nabla \chi_{p,\underline x}(x)\cdot\nabla \psi(t,x)dx. 
\end{equation}
We now invoke an argument introduced by Valeria Banica
in \cite{Banica2004}: The fact that $\|\psi\|_{L^2}=\|Q\|_{L^2}$
and \eqref{eq:GNsharp} imply that for every $t\in [-1,0[$,
\begin{equation*}
  E_{\rm cub}\( e^{is\chi_{p,\underline x}} \psi(t)\)\ge 0,\quad \forall s\in \R.
\end{equation*}
The above quantity is a polynomial of degree two in $s$,
\begin{align*}
   E_{\rm cub}\( e^{is\chi_{p,\underline x} }\psi(t)\)&= E_{\rm cub}\(
   \psi(t)\)-2s\IM \int_{\R^2} \bar \psi(t,x)\nabla
   \chi_{p,\underline x}(x)\cdot\nabla \psi(t,x)dx \\
& \quad   + s^2\int_{\R^2}|\nabla \chi_{p,\underline x}(x)|^2|\psi(t,x)|^2dx,
 \end{align*}
 so its discriminant is non-positive, i.e.
 \begin{align*}
   \left|2\IM \int_{\R^2} \bar \psi(t,x)\nabla
   \chi_{p,\underline x}(x)\cdot\nabla \psi(t,x)dx \right|^2&\le 4  E_{\rm cub}\(
   \psi(t)\) \int_{\R^2}|\nabla \chi_{p,\underline x}(x)|^2|\psi(t,x)|^2dx\\
   &\lesssim \|x u_0\|_{L^2}^2g_{p,\underline  x}(t) .
 \end{align*}
Here we have used \eqref{eq:Epsi} and the definition of
$\chi$. Together with \eqref{eq:gdot} this yields
\begin{equation*}
  \left|\dot g_{p,\underline  x}(t) \right|\lesssim \sqrt{g_{p,\underline  x}(t) },
\end{equation*}
and, hence, by integration,
\begin{equation}\label{eq:decrvirie}
  \left|\sqrt{g_{p,\underline  x}(t) }-\sqrt{g_{p,\underline  x}(\tau)
    }\right|\lesssim |t-\tau|,\quad -1\le t\le\tau<0.
\end{equation}

\noindent {\bf Step 2. Blow-up profile along the sequence $\tau_n$.} 
Following \cite{HmidiKeraaniIMRN}, we set
\begin{equation}\label{eq:defrho}
  \rho_n =\frac{\|\nabla Q\|_{L^2}}{\|\nabla \psi(\tau_n, \cdot)\|_{L^2}},\quad
  v_n(x) :=\rho_n \psi\(\tau_n,\rho_nx\).
\end{equation}
We have $\|v_n\|_{L^2}=\|Q\|_{L^2}$, $\|\nabla v_n\|_{L^2}=\|\nabla
Q\|_{L^2}$ for all $n\in \N$. In view of \eqref{eq:gradpsi},
\begin{equation*}
  E_{\rm cub}\(v_n\)=\rho_n^2 E_{\rm
    cub}\(\psi(\tau_n)\)\Tend n\infty 0, 
\end{equation*}
since $ E_{\rm cub}\(\psi(t)\)$ is bounded from \eqref{eq:Epsi}.
We have all the ingredients necessary to invoke Theorem~2.3 from
\cite{HmidiKeraaniIMRN}, \emph{along the sequence $\tau_n$}: there exist 
$x_n\in \R^2$ and $\theta_n\in \R$ such that
\begin{equation}\label{eq:CVsequ}
  \rho_n e^{i\theta_n}\psi\(\tau_n,\rho_nx+x_n\)\Tend
  n\infty 
  Q,\quad\text{strongly in }H^1(\R^2). 
\end{equation}
\smallbreak

\noindent {\bf Step 3. Lower bound on $\|\nabla
  \psi(\tau_n,\cdot)\|_{L^2}$.} The convergence \eqref{eq:CVsequ}
implies that, in the sense of measures,
\begin{equation*}
  |\psi(\tau_n,x)|^2dx-Q(x)\delta_{x=x_n}\rightharpoonup 0.
\end{equation*}
This and the bound \eqref{eq:apviriel} imply that the sequence 
$(x_n)_{n\in \N}$ is bounded, hence we can extract a subsequence $(x_{n'})_{n'\in \N}$ such that
\begin{equation*}
  x_{n'}\to \underline x,\quad \underline x\in \R^2.
\end{equation*}
Using $\chi_{p,\underline x}$ as a test function, the above
convergence implies that $g_{p,\underline x}(\tau_{n'})\to
0$, as $\chi_p(0)=0$, and thus, \eqref{eq:decrvirie} yields
\begin{equation*}
  \left|\sqrt{g_{p,\underline  x}(t) }\right|\lesssim |t|,\quad -1\le t<0.
\end{equation*}
Now letting $p\to \infty$, Fatou's lemma implies
\begin{equation*}
  \int_{\R^2}|x-\underline x|^2|\psi(t,x)|^2dx\lesssim t^2.
\end{equation*}
Using of the classical uncertainty principle, i.e.
\begin{equation*}
  \|\psi(t,\cdot)\|_{L^2(\R^2)}^2\le \(\int_{\R^2}|x-\underline
  x|^2|\psi(t,x)|^2dx\)^{1/2}\|\nabla \psi(t,\cdot)\|_{L^2(\R^2)},
\end{equation*}
allows us to conclude
\begin{equation*}
  \|\nabla \psi(t,\cdot)\|_{L^2(\R^2)}\gtrsim \frac{1}{t}.
\end{equation*}
In particular, along the sequence $\tau_n$, we find
\begin{equation*}
0<  \rho_n\lesssim \tau_n.
\end{equation*}

\noindent {\bf Step 4. Conclusion.}
In view of \eqref{eq:CVsequ}, and since 
$H^1(\R^2)\hookrightarrow L^6(\R^2)$,
\begin{equation*}
  \|\psi(\tau_n, \cdot)\|_{L^6}^6 \Eq n\infty
  \frac{1}{\rho_n^4}\|Q\|_{L^6}^6 \gtrsim \frac{1}{\tau_n^4}.
\end{equation*}
Recalling that $\psi$ and $u$ are linked via \eqref{eq:psi}, we find
  \begin{equation*}
    \|u(t_n, \cdot)\|_{L^6}^6 =
    \frac{1}{t_n^4}\left\|\psi\(\frac{-1}{t_n},
      \cdot\)\right\|_{L^6}^6 \gtrsim 1,
  \end{equation*}
  which is incompatible with the first part of \eqref{eq:L6control}. Therefore,
   \eqref{eq:JDV} cannot hold, i.e. $Ju\in  L^\infty(\R_t;L^2(\R^2))$, and hence
  Theorem~\ref{theo:2Ddispersion} follows from Lemma~\ref{lem:reduc}. 

\section{Existence of solitons and first properties}
\label{sec:soliton}

\subsection{A priori estimates}

Suppose we have a solution $u(t,x) =e^{i\omega t}\phi(x)$, with $\phi$
sufficiently smooth and localized. Then \eqref{eq:nls} becomes
\begin{equation}
  \label{eq:soliton}
  -\frac{1}{2}\Delta \phi -|\phi|^2\phi +|\phi|^4\phi +\omega \phi=0.
\end{equation}
\begin{proposition}[A priori estimates for solitary waves]\label{prop:solitonapriori}
  Let $1\le d\le 3$. If $\phi\in H^1(\R^d)$ solves \eqref{eq:soliton}, then we have:
  \begin{enumerate}
  \item Pohozaev identities:
    \begin{equation}
  \label{eq:phi1}
  \frac{1}{2}\int_{\R^d}|\nabla \phi|^2 \, dx
  -\int_{\R^d}|\phi|^4 \, dx+\int_{\R^d}|\phi|^6 \, dx+\omega \int_{\R^d}|\phi|^2  \, dx= 0,
\end{equation}
\begin{equation}
  \label{eq:phi2}
  \frac{d-2}{2} \int_{\R^d}|\nabla \phi|^2 \, dx
 -\frac{d}{2}\int_{\R^d}|\phi|^4 \, dx+\frac{d}{3}\int_{\R^d}|\phi|^6 \, dx+\omega
 d \int_{\R^d}|\phi|^2  \, dx= 0.
\end{equation}
\item If  $\phi\not \equiv 0$, then
  $0<\omega<\tfrac{3}{16}$.
\item If $d=2$ and $\phi\not=0$, then $\|\phi\|_{L^2}>\|Q\|_{L^2}$,
  where $Q$ is cubic ground state solution to \eqref{eq:Q2D}. 
\item If in addition $\phi\in L^\infty\cap C^2$ is real-valued, then
  \begin{equation}\label{eq:phiLinfini}
    \|\phi\|_{L^\infty(\R^d)}\le \sqrt{\frac{1+\sqrt{1-4\omega}}{2}}.
  \end{equation}
 \end{enumerate}
\end{proposition}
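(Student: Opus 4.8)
The plan is to prove the four items essentially independently: items (2) and (3) will be algebraic consequences of the two Pohozaev identities in (1) together with positivity and the sharp Gagliardo--Nirenberg inequality, while (4) comes from the maximum principle. For \eqref{eq:phi1} I would pair \eqref{eq:soliton} with $\bar\phi$, integrate over $\R^d$, integrate by parts the Laplacian (turning $-\tfrac12\int\Delta\phi\,\bar\phi$ into $\tfrac12\|\nabla\phi\|_{L^2}^2$) and take the real part. For \eqref{eq:phi2} I would use the dilation argument: pair \eqref{eq:soliton} with $x\cdot\nabla\bar\phi$ and take the real part, equivalently differentiate the action $S(\phi_\lambda)$ at $\lambda=1$ along $\phi_\lambda=\phi(\cdot/\lambda)$, using $\|\nabla\phi_\lambda\|_{L^2}^2=\lambda^{d-2}\|\nabla\phi\|_{L^2}^2$ and $\|\phi_\lambda\|_{L^p}^p=\lambda^d\|\phi\|_{L^p}^p$. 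Both computations need enough decay to justify the integration by parts, which I would obtain from elliptic regularity (any $H^1$ solution of \eqref{eq:soliton} is smooth and decaying), so this step is routine.

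For item (2), write $A=\|\nabla\phi\|_{L^2}^2$, $B=\|\phi\|_{L^4}^4$, $C=\|\phi\|_{L^6}^6$, $D=\|\phi\|_{L^2}^2>0$, so that \eqref{eq:phi1}--\eqref{eq:phi2} are linear in these. Eliminating the $\omega D$ term gives $A=\tfrac{d}{2}B-\tfrac{2d}{3}C$, and $A\ge0$ then forces $C\le\tfrac34B$; substituting back yields $\omega D=\tfrac{4-d}{4}B+\tfrac{d-3}{3}C$. For $d\le3$ the coefficient of $C$ is $\le0$, so bounding $C\le\tfrac34B$ gives $\omega D\ge\tfrac14B>0$, proving $\omega>0$. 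The upper bound $\omega<\tfrac3{16}$ is the crux, and I expect it to be the main obstacle: the two integral identities together with $A\ge0$ and Hölder's inequality \eqref{eq:holder} only yield a weaker bound (e.g. $\omega<\tfrac{27}{128}$ when $d=1$), so the sharp constant must come from the pointwise structure of the nonlinearity. Here I would pass to the effective potential $W(s)=\tfrac{\omega}{2}s^2-\tfrac14s^4+\tfrac16s^6$, so that \eqref{eq:soliton} reads $\tfrac12\Delta\phi=W'(\phi)$ for real $\phi$, and exploit that for a radial solution the ODE-energy $\tfrac14|\phi'|^2-W(\phi)$ is nonincreasing in $r$ because of the $\tfrac{d-1}{r}$ damping; comparing $r=0$ (where $\phi'=0$) with $r=\infty$ forces $W(\|\phi\|_{L^\infty})\le0$, i.e. $\omega\le\tfrac12\|\phi\|_{L^\infty}^2-\tfrac13\|\phi\|_{L^\infty}^4\le\tfrac3{16}$, the right-hand maximum being attained at $\|\phi\|_{L^\infty}^2=\tfrac34$. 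The delicate point is that this uses radial symmetry (available for ground states), so some care is needed to extend it to every $H^1$ solution.

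For item (3), specialize $A=\tfrac{d}{2}B-\tfrac{2d}{3}C$ to $d=2$, giving $A=B-\tfrac43C$, whence $A<B$ strictly since $C=\|\phi\|_{L^6}^6>0$. Inserting this into the sharp Gagliardo--Nirenberg inequality \eqref{eq:GNsharp} gives $B\le\tfrac{D}{\|Q\|_{L^2}^2}A<\tfrac{D}{\|Q\|_{L^2}^2}B$, and dividing by $B>0$ yields $D>\|Q\|_{L^2}^2$, that is $\|\phi\|_{L^2}>\|Q\|_{L^2}$. This is short once \eqref{eq:phi2} is in hand.

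For item (4), assume $\phi\in L^\infty\cap C^2$ real-valued and decaying, so $|\phi|$ attains its maximum $M=\|\phi\|_{L^\infty}$ at some $x_0$; replacing $\phi$ by $-\phi$ if necessary (the nonlinearity is odd) I may take $\phi(x_0)=M>0$. There $\nabla\phi(x_0)=0$ and $\Delta\phi(x_0)\le0$, so evaluating \eqref{eq:soliton} at $x_0$ gives $\omega M-M^3+M^5\le0$; dividing by $M>0$ produces $M^4-M^2+\omega\le0$, and solving this quadratic in $M^2$ gives $M^2\le\tfrac{1+\sqrt{1-4\omega}}{2}$, which is \eqref{eq:phiLinfini}. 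The only thing to verify is that the maximum is attained (decay at infinity), after which the argument is immediate; for a genuinely complex solution one would run the same computation on $|\phi|$ via Kato's inequality, at the cost of the weaker constant $\tfrac14$.
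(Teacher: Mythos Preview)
Your treatment of items (1), (3), and (4) matches the paper's essentially verbatim, and your algebraic route to $\omega>0$ from the two Pohozaev identities is in fact more uniform than the paper's, which handles $d=2$ and $d=3$ by separate subtractions.

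The one genuine gap is the one you already flag: your argument for $\omega<\tfrac{3}{16}$ via the monotonicity of the radial ODE-energy $\tfrac14|\phi'|^2-W(\phi)$ only covers radial solutions. In $d=1$ this is harmless, since every decaying $H^1$ solution of the autonomous second-order ODE is (a constant phase times) a real function even about its maximum, so your computation is legitimate and is precisely what the paper means when it cites \cite{BL83a}. In $d\ge 2$, however, the proposition is stated for \emph{all} $H^1$ solutions, not just ground states, and there the paper's route is both simpler and fully general. The key observation is that the map
\[
s\longmapsto \omega s^2 - \tfrac12 s^4 + \tfrac13 s^6
\]
is nonnegative on $[0,\infty)$ precisely when $\omega\ge\tfrac{3}{16}$ (divide by $s^2$ and minimize $\omega-\tfrac12 t+\tfrac13 t^2$ over $t=s^2>0$ to get $\omega-\tfrac{3}{16}$). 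Thus if $\omega\ge\tfrac{3}{16}$, the last three terms in \eqref{eq:phi2} combine into a nonnegative integral, while the remaining term $\tfrac{d-2}{2}\|\nabla\phi\|_{L^2}^2$ is also $\ge 0$ for $d\ge 2$; the identity then forces every term to vanish, hence $\phi\equiv 0$. No symmetry assumption is needed, and nothing beyond \eqref{eq:phi2} is used. This pointwise-sign argument is the missing idea in your sketch and makes the radial ODE-energy detour unnecessary in dimensions $2$ and $3$.
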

\begin{proof}
For item (1), we quickly recall the method to derive Pohozaev identities formally,
and refer to \cite{BL83a} for a rigorous justification via density type arguments.
Firstly, multiplying \eqref{eq:soliton} by $\bar \phi$ and integrating yields
\eqref{eq:phi1}. In particular, we infer $\omega\in \R$. Secondly, by multiplying \eqref{eq:soliton} with 
$x\cdot \nabla \bar \phi$ and
integrating by parts we obtain \eqref{eq:phi2}.
For $d=2$, subtracting \eqref{eq:phi2} from \eqref{eq:phi1} yields
\begin{equation*}
  \frac{1}{2}\int_{\R^2}|\nabla \phi(x)|^2 \, dx
  +\frac{1}{3}\int_{\R^2}|\phi(x)|^6 \, dx=\omega \int_{\R^2}|\phi(x)|^2  \, dx,
\end{equation*}
hence $\omega>0$ unless $\phi\equiv 0$.
In the case $d=3$, we obtain similarly
\begin{equation*}
  \frac{1}{2}\int_{\R^3}|\phi(x)|^4  \, dx=2\omega \int_{\R^3}|\phi(x)|^2  \, dx,
\end{equation*}
and thus we arrive at the same conclusion.
\smallbreak

(2) From now on, we shall denote
\[ F(s) = \frac{1}{4}s^4-\frac{1}{6}s^6\]
and set
\begin{equation*}
  \omega^* = \sup\left\{\omega>0;\quad \frac{\omega}{2}s^2-F(s)<0\text{ for
    some }s>0\right\}.
\end{equation*}
A computation then shows $\omega^* =\tfrac{3}{16}$. In particular, if $\omega\ge \tfrac{3}{16}$, we have the pointwise relation
\[-\frac{1}{2}|\phi(x)|^4+\frac{1}{3}|\phi(x)|^6+\omega
|\phi(x)|^2\ge 0 ,\quad \forall x\in \R^d,\]
which, in view \eqref{eq:phi2}, implies $\phi\equiv 0$ for $d\ge
2$. In the case $d=1$, the conclusion follows from ODE arguments, and
more precisely \cite[Theorem~5]{BL83a}. 
\smallbreak

(3) We suppose $d=2$ and introduce
\begin{equation*}
  \gamma = \frac{\|\phi\|_{L^4}^4}{\|\nabla \phi\|_{L^2}^2},
\end{equation*}
which allows us to rewrite \eqref{eq:phi1} as
\[
\(\frac{1}{2}-\gamma\)\int_{\R^2}|\nabla \phi|^2
  +\int_{\R^2}\phi^6+\omega\int_{\R^2}\phi^2=0,
\]
Similarly, we can rewrite \eqref{eq:phi2} for $d=2$, by using $\gamma$, in the following form
\begin{align*}
  \frac{\gamma}{2}\int_{\R^2}|\nabla \phi|^2-\frac{1}{3}\int_{\R^2}|\phi|^6-\omega
  \int_{\R^2}|\phi|^2 = 0.
\end{align*}
Combining these identities, we infer
\begin{equation*}
  \int_{\R^2}|\phi|^6 =\frac{3(\gamma-1)}{4} \int_{\R^2}|\nabla \phi|^2,
\end{equation*}
and in particular $\gamma>1$, i.e. $ \|\phi\|_{L^4}^4 > \|\nabla \phi\|_{L^2}^2$. 
In view of the sharp Gagliardo-Nirenberg inequality \eqref{eq:GNsharp}, this consequently implies that the mass of the cubic-quintic ground states
satisfies $\|\phi\|_{L^2}>\|Q\|_{L^2}$.
\smallbreak

(4) Let $\phi\in C^2$ be a real-valued bounded solution to
\eqref{eq:soliton}. Suppose that $\phi$ reaches its 
  maximum at $x_0\in \R^d$. Then $\Delta \phi(x_0)\le 0$, and hence
  \begin{equation*}
   ( -\phi^3+\phi^5 +\omega\phi)_{\mid x=x_0}\le 0.
  \end{equation*}
  Writing
  \begin{equation*}
    \omega\phi -\phi^3+\phi^5 =\phi
    \(\phi^2-\frac{1-\sqrt{1-4\omega}}{2}\)\(\phi^2-\frac{1+\sqrt{1-4\omega}}{2}\), 
  \end{equation*}
  we see that
  \begin{equation*}
    \phi(x_0)\le \sqrt{\frac{1+\sqrt{1-4\omega}}{2}}.
  \end{equation*}
  Reasoning similarly for a minimum of $\phi$, we infer \eqref{eq:phiLinfini}.
\end{proof}


\subsection{Existence and uniqueness}

Denote $f(s) =s^3-s^5$ and
\[ F(s) =\int_0^s f(\tau) \, d\tau = \frac{1}{4}s^4-\frac{1}{6}s^6, 
\]
as before. We have already seen that 
\begin{equation*}
   \sup\left\{\omega>0;\quad \frac{\omega}{2}s^2-F(s)<0\text{ for
    some }s>0\right\} =\frac{3}{16} .
\end{equation*}
Then according to \cite{BL83a} (treating the case $d=1$ or $d=3$) and \cite{BGK83}
(treating the case $d=2$), for all $\omega\in ]0,\omega^*[$, there exists a solution
$\phi_\omega$ of \eqref{eq:soliton}. Uniqueness of $\phi_\omega$ in
$d=1$  is proven in
\cite{BL83a}, while in $d=3$ uniqueness follows from \cite{SerrinTang00}, as pointed out in \cite{KOPV17}.
Finally, for $d=2$, we infer uniqueness from the results of \cite{Jang10}, where we emphasize that the 
assumptions made there correspond more closely to those made to prove
existence.
\smallbreak

We recall that the action, defined in the introduction, is given by
\begin{equation*}
  S(\phi) = \frac{1}{2}\|\nabla\phi\|_{L^2}^2 +\omega\|\phi\|_{L^2}^2
  -2V(\phi),\quad\text{where}\quad V(\phi)
  = \int_{\R^d}F\(\phi(x)\)dx,
\end{equation*}
and satisfies
\begin{equation}\label{eq:action-energy}
  S(\phi)=E(\phi) + \omega\|\phi\|_{L^2}^2=E(\phi)+\omega M(\phi).
\end{equation}
As established in \cite[Lemma~2.3]{CiJeSe09} (which requires the
nonlinearity to be energy-subcritical, i.e. $d\le 2$ in our
case), every minimizer of the action is of the 
form
\begin{equation*}
  \varphi(x) = e^{i\theta} \phi(x),
\end{equation*}
for some constant $\theta\in \R$, and where $\phi$ is a positive least
action solution of 
\eqref{eq:soliton}. Then \cite[Proposition~4]{ByJeMa09} implies that
\begin{equation*}
  \phi(x)=\phi_\omega(x-x_0),
\end{equation*}
for some $x_0\in \R^d$, and we recall that $\phi_\omega$ is the unique
radial, positive minimizer of the action. The same is true for $d=3$
(where the defocusing nonlinearity is energy-critical), as explained
in \cite{MorozMuratov2014}. 
We summarize all of these results in the proposition below.

\begin{proposition}[Existence and uniqueness of ground states] \label{prop:phi}
 Let $1\le d\le 3$.  Suppose that
  \begin{equation*}
  0<\omega<\frac{3}{16}.
\end{equation*}
Then \eqref{eq:soliton} has a unique radial, real-valued solution $\phi_\omega$ such that
\begin{enumerate}
\item $\phi_\omega>0$ on $\R^d$.
  \item $\phi_\omega$ is radially symmetric, $\phi_\omega(x)=\phi(r)$, where
    $r=|x|$, and $\phi$ is a non-increasing function of $r$.
  \item $\phi_\omega\in C^2(\R^d)$.
    \item The derivatives of order at most two of $\phi_\omega$ decay
      exponentially:
      \begin{equation*}
        \exists \delta>0,\quad |\d^\alpha \phi_\omega(x)|\lesssim
        e^{-\delta|x|},\quad |\alpha|\le 2.
      \end{equation*}
    \item For every solution $\varphi$ to \eqref{eq:soliton},
      \begin{equation*}
        0< S(\phi_\omega)\le S(\varphi).
      \end{equation*}
    \item Every minimizer $\varphi$ of the action $S(\phi)$ is of the form
\begin{equation*}
  \varphi(x) = e^{i\theta} \phi_\omega(x-x_0),
\end{equation*}
for some constants $\theta\in \R$, $x_0\in \R^d$.  
    \end{enumerate}
\end{proposition}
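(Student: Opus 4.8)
The plan is to assemble the statement from the existence/uniqueness theory for positive solutions of semilinear elliptic equations, using the a priori estimates of Proposition~\ref{prop:solitonapriori} to fix the admissible range of $\omega$, and then to upgrade the pointwise (least-action) description to the variational (action-minimizing) one required in items (5)--(6). The guiding remark is that all the analytic content is already isolated in the cited works; the work here is to check that our nonlinearity meets their hypotheses and to organize the dimension-dependent pieces.

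First, for existence I would look for real, positive, radial solutions and rewrite \eqref{eq:soliton} as $\tfrac12\Delta\phi+g(\phi)=0$ with $g(s)=s^3-s^5-\omega s$, whose primitive is $G(s)=F(s)-\tfrac{\omega}{2}s^2$. The hypothesis $0<\omega<\tfrac{3}{16}=\omega^*$ is exactly the Berestycki--Lions condition $g'(0)=-\omega<0$ together with $G(\zeta)>0$ for some $\zeta>0$ (positivity of $\omega$ being guaranteed by Proposition~\ref{prop:solitonapriori}). Hence a positive, radial, non-increasing least-action solution exists by \cite{BL83a} for $d\in\{1,3\}$ and by \cite{BGK83} in the borderline case $d=2$, where the usual subcritical embeddings fail and a dedicated argument is needed. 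Standard elliptic bootstrapping then yields $\phi_\omega\in C^2$, and since $g'(0)<0$ the linearization at infinity is $-\tfrac12\Delta+\omega$ with a spectral gap, so a comparison argument gives the exponential decay of $\phi_\omega$ and its derivatives up to order two. This delivers items (1)--(4); the radial monotonicity can alternatively be recovered from the moving-plane method.

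The delicate point, and the step I expect to be the main obstacle, is uniqueness, which is genuinely dimension-dependent and cannot be obtained by a soft argument. For $d=1$ it follows from the ODE analysis in \cite{BL83a}; for $d=3$ I would invoke the uniqueness theorem of \cite{SerrinTang00} for ground states of $\Delta u=g(u)$, applied exactly as in \cite{KOPV17}; for $d=2$ the appropriate reference is \cite{Jang10}, whose assumptions are tailored to a nonlinearity such as ours. Each of these rests on careful shooting and non-degeneracy arguments for the associated radial ODE, and this is where the substance of the proof (carried out elsewhere) is concentrated.

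Finally, to obtain the variational statements (5)--(6), I would use $S=E+\omega M$ from \eqref{eq:action-energy} together with the Pohozaev identities of Proposition~\ref{prop:solitonapriori}, which show both that any nontrivial solution has $S>0$ and that $\phi_\omega$ realizes the least action. To identify the action minimizers I would apply \cite[Lemma~2.3]{CiJeSe09}, valid in the energy-subcritical range $d\le 2$, forcing every minimizer of $S$ to be, up to a constant phase $e^{i\theta}$, a positive least-action solution; combined with \cite[Proposition~4]{ByJeMa09} this pins it down to a translate $\phi_\omega(\cdot-x_0)$, yielding (6) and hence uniqueness up to the symmetries. For $d=3$, where the quintic term is energy-critical and \cite{CiJeSe09} does not apply directly, I would instead rely on \cite{MorozMuratov2014} to reach the same conclusion.
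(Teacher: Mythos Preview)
Your proposal is correct and follows essentially the same route as the paper: existence via \cite{BL83a} (for $d=1,3$) and \cite{BGK83} (for $d=2$), uniqueness via \cite{BL83a}, \cite{SerrinTang00}/\cite{KOPV17}, and \cite{Jang10} in dimensions $1$, $3$, $2$ respectively, and the characterization of action minimizers through \cite{CiJeSe09}, \cite{ByJeMa09}, with \cite{MorozMuratov2014} handling the energy-critical case $d=3$. Your write-up even adds a little more detail (the explicit verification of the Berestycki--Lions hypotheses and the exponential-decay mechanism) than the paper's own discussion.
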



\section{Further properties of the ground states}
\label{sec:further}

\subsection{Asymptotic r\'egimes for limiting values of $\omega$}
\label{sec:asympt-omega}

The results below, regarding the asymptotic r\'egimes $\omega\to 0$ and $\omega\to
\tfrac{3}{16}$, are included in \cite{KOPV17} or
\cite{LewinRotaNodari-p}.

\begin{proposition}[Asymptotics of the ground state mass]\label{prop:omega-asym}
  Let $d=2$ or $3$ and $Q$ denote the cubic nonlinear ground state. The
map $\omega\mapsto \phi_\omega$ given by Proposition~\ref{prop:phi} is
real analytic and admits the following asymptotic behavior:
\begin{enumerate} 
\item In the limit $\omega\to 0$, we have:
\begin{itemize}
  \item[(a)] If $d=2$,
    \begin{equation*}
      M(\phi_\omega) = M(Q) + \frac{2\omega}{3}\|Q\|_{L^6}^6+\O\(\omega^2\),
    \end{equation*}
  \item[(b)] If $d=3$,
      \begin{equation*}
      M(\phi_\omega) = \frac{1}{\sqrt\omega}M(Q) + \frac{\sqrt\omega}{2}\|Q\|_{L^6}^6+\O\(\omega^{3/2}\).
    \end{equation*}
  \end{itemize}
  \item  In the limit $\omega\to \tfrac{3}{16}$, it holds
  \begin{equation*}
    M(\phi_\omega) + \frac{\d M(\phi_\omega)}{\d\omega}\Tend \omega {\tfrac{3}{16}} \infty.
  \end{equation*}
\end{enumerate}
\end{proposition}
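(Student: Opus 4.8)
\emph{The plan.} I would establish the real-analyticity of $\omega\mapsto\phi_\omega$ first, and then treat the two endpoints $\omega\to0$ and $\omega\to\tfrac3{16}$ by separate perturbative analyses. Analyticity follows from the analytic implicit function theorem applied to $F(\omega,\phi)=-\tfrac12\Delta\phi+\omega\phi-\phi^3+\phi^5$ on spaces of radial functions: the uniqueness in Proposition~\ref{prop:phi} encodes the non-degeneracy of the linearized operator $L_\omega=-\tfrac12\Delta+\omega-3\phi_\omega^2+5\phi_\omega^4$ on the radial sector (its only kernel, spanned by the translation modes $\d_j\phi_\omega$, is non-radial), so $D_\phi F(\omega,\phi_\omega)=L_\omega$ is invertible there and the branch is analytic. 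In particular $\d_\omega\phi_\omega$ exists and solves $L_\omega\,\d_\omega\phi_\omega=-\phi_\omega$.

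\emph{Limit $\omega\to0$.} The first move is to rescale, writing $\phi_\omega(x)=\sqrt\omega\,R_\omega(\sqrt\omega\,x)$, which turns \eqref{eq:soliton} into $-\tfrac12\Delta R_\omega+R_\omega-R_\omega^3+\omega R_\omega^5=0$, i.e. the cubic ground-state equation with an $\O(\omega)$ defocusing perturbation and $R_0=Q$. The virtue of this scaling is that $M(\phi_\omega)=\omega^{1-d/2}\|R_\omega\|_{L^2}^2$, so the prefactor is $1$ for $d=2$ and $\omega^{-1/2}$ for $d=3$ — this alone accounts for the two different leading terms. By the analyticity argument above (now around $Q$, with $L_+:=-\tfrac12\Delta+1-3Q^2$ non-degenerate on radial functions), $R_\omega=Q+\omega R_1+\O(\omega^2)$ in $H^2$ with $L_+R_1=-Q^5$, whence $\|R_\omega\|_{L^2}^2=\|Q\|_{L^2}^2+2\omega\langle Q,R_1\rangle+\O(\omega^2)$. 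The coefficient is $\langle Q,R_1\rangle=-\langle L_+^{-1}Q,Q^5\rangle$, and here I would use the scaling identity $L_+(\Lambda Q)=-2Q$ with $\Lambda=1+x\cdot\nabla$ (obtained by differentiating at $\lambda=1$ the fact that $\lambda Q(\lambda\,\cdot)$ solves the $\lambda^2$-equation), giving $L_+^{-1}Q=-\tfrac12\Lambda Q$. Combined with the integration by parts $\int(x\cdot\nabla Q)Q^5=-\tfrac d6\|Q\|_{L^6}^6$ this yields $\langle Q,R_1\rangle=\tfrac{6-d}{12}\|Q\|_{L^6}^6$, i.e. the stated coefficients $\tfrac23$ for $d=2$ and (after the $\omega^{-1/2}$ prefactor) $\tfrac12$ for $d=3$.

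\emph{Limit $\omega\to\tfrac3{16}$, and the main obstacle.} Recall $\tfrac3{16}=\omega^*$ is precisely the threshold at which $W_\omega(s)=\tfrac\omega2 s^2-F(s)$, $F(s)=\tfrac14 s^4-\tfrac16 s^6$, stops taking negative values, so at $\omega=\omega^*$ it has a degenerate double zero $W_{\omega^*}(s^*)=W_{\omega^*}'(s^*)=0$ at some $s^*>0$. In the radial-ODE picture $\tfrac12(\phi''+\tfrac{d-1}r\phi')=W_\omega'(\phi)$ the ground state is the orbit homoclinic to $0$, and as $\omega\uparrow\omega^*$ the approximate equilibrium at $s^*$ forces a plateau of height $\approx s^*$ and diverging width, so $M(\phi_\omega)=\|\phi_\omega\|_{L^2}^2\to\infty$ (in $d=1$ this is transparent from \eqref{eq:phi1D}, where $\phi_\omega\to\sqrt3/2$ pointwise while the support spreads). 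The cleanest route to the full statement is to set $d(\omega):=S(\phi_\omega)$; the critical-point property gives $d'(\omega)=M(\phi_\omega)$ and $d''(\omega)=\d_\omega M$, so it suffices to prove $\d_\omega M=-2\langle\phi_\omega,L_\omega^{-1}\phi_\omega\rangle\to+\infty$, which simultaneously yields $M+\d_\omega M\to\infty$ and the stability in Theorem~\ref{theo:2Dstability}(3) / Proposition~\ref{prop:soliton3D}(4). I expect this to be the hard step: unlike the regular perturbation at $\omega=0$, one must quantify the rate at which the plateau width diverges and then differentiate it, which amounts to a matched-asymptotics/spectral analysis of $L_\omega$ near the constant state $s^*$, where the linearization itself degenerates. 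This is exactly the delicate bifurcation analysis carried out in \cite{KOPV17,LewinRotaNodari-p}, on which I would rely.
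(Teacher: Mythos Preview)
Your proposal is correct and follows essentially the same route as the paper. The paper makes the identical rescaling (writing $\psi_\omega(x)=\omega^{-1/2}\phi_\omega(\omega^{-1/2}x)$ where you write $R_\omega$), invokes the implicit function theorem around $Q$ to get $\psi_\omega=Q-\omega L^{-1}Q^5+\O(\omega^2)$, and computes the inner product via the same identity $L(Q+x\cdot\nabla Q)=-2Q$ together with $\langle Q+x\cdot\nabla Q,Q^5\rangle=(1-d/6)\|Q\|_{L^6}^6$; for the endpoint $\omega\to\tfrac{3}{16}$ the paper likewise defers to \cite{LewinRotaNodari-p} (and to \cite{KOPV17} for the related $d=3$ bounds), so your assessment that this is the delicate step requiring outside input matches the paper's treatment exactly.
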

To turn the singular limit $\omega\to 0$ 
into a regular limit, one proceeds as in \cite{MorozMuratov2014,KOPV17}
and changes the unknown function $\phi_\omega$ into
\begin{equation*}
  \psi_\omega(x)= \frac{1}{\sqrt\omega}\phi_\omega\(\frac{x}{\sqrt\omega}\).
\end{equation*}
Then \eqref{eq:soliton} is equivalent to
\begin{equation*}
  -\frac{1}{2}\Delta \psi_\omega+\psi_\omega-\psi_\omega^3+\omega \psi_\omega^5=0.
\end{equation*}
We thereby note that the rescaling $\phi_\omega\mapsto \phi_\omega$ is
$L^2$-unitary exactly for $d=2$. As before, we denote by $Q$ 
the unique non-negative, radially symmetric ground state solution to
\begin{equation*}
  -\frac{1}{2}\Delta  Q+Q-Q^3=0,\quad x\in \R^d,
\end{equation*}
and consider the linearized operator
\[L: f \mapsto -\frac{1}{2}f-3Q^2f+f.\]
Then $L: H^1_{\rm rad}\to H^{-1}_{\rm rad}$ is an isomorphism, where $H^1_{\rm rad}$ denotes the Sobolev space of radial $H^1$ functions. 
Invoking the implicit function theorem, as well as uniqueness for
\eqref{eq:soliton}, we have, in $H^1(\R^d)$ and as $\omega\to 0$,
\begin{equation}\label{eq:psiom}
  \psi_\omega(x) = Q(x) -\omega\(L^{-1}Q^5\)(x)+\O(\omega^2),
\end{equation}

In particular, in the case $d=2$, we infer
\begin{equation*}
  \|\phi_\omega\|_{L^2(\R^2)} = \|\psi_\omega\|_{L^2(\R^2)} \Tend
  \omega 0 \|Q \|_{L^2(\R^2)},
\end{equation*}
thus showing that ground states for the cubic-quintic NLS in
2D have mass strictly larger but arbitrarily close to that of the cubic ground
state $Q$. Noting more precisely the relation
(\cite[Proposition~B.1]{Weinstein85}) 
\begin{equation*}
  L\(Q+x\cdot \nabla Q\)=-2Q,
\end{equation*}
and recalling \eqref{eq:psiom}, we infer, 
\begin{align*}
  M(\psi_\omega) &=
 \<Q-\omega\(L^{-1}Q^5\),Q-\omega\(L^{-1}Q^5\)\> +\O(\omega^2) \\
  &= M(Q)-2 \omega\<Q,L^{-1}Q^5\>+\O(\omega^2) \\
   &= M(Q)+ \omega\< L\(Q+x\cdot \nabla Q\),L^{-1}Q^5\>+\O(\omega^2) \\
 &= M(Q)+ \omega\< Q+x\cdot \nabla Q,Q^5\>+\O(\omega^2) \\
  &= M(Q)+ \omega \(1-\frac{d}{6}\)\|Q\|_{L^6(\R^d)}^6 +\O(\omega^2) .
\end{align*}
Translating this in terms of $\phi_\omega$ yields the first part
of the proposition. 
\smallbreak

The second part of the proposition is proven in
\cite{LewinRotaNodari-p}. We also note that for $d=3$, in \cite[Theorem~2.2,
(v)]{KOPV17} the authors prove that 
\begin{equation*}
 \(\tfrac{3}{16}-\omega\)^{-3}\lesssim  M(\phi_\omega)\lesssim \(\tfrac{3}{16}-\omega\)^{-3},
\end{equation*}
which, however, does not rule out possible oscillations of $M(\phi_\omega)$.

The information provided by Proposition~\ref{prop:omega-asym} is
interesting in view of Grillakis-Shatah-Strauss theory: for $d=2$,
$\omega\mapsto M(\phi_\omega)$ is increasing near $\omega=0$ and near
$\omega=\tfrac{3}{16}$, while for $d=3$, this map is decreasing near $\omega=0$
and increasing near $\omega=\tfrac{3}{16}$. As mentioned in the introduction,
we actually expect this map to be increasing on the full interval
$]0,\tfrac{3}{16}[$ when $d=2$, 
decreasing on $]0,\omega_0[$ and increasing on $]\omega_0,\tfrac{3}{16}[$ when
$d=3$. 


\subsection{Spectral properties}
\label{sec:spectral-properties}

To take advantage of the above properties, we
have to check the spectral Assumption~3 imposed in \cite{GSS87}.
To state the spectral assumption, we write the
second order derivative of the action as
\begin{equation*}
  \<S''(\phi_\omega)w,w\> = \frac{1}{2}\<L_1 u,u\>+\frac{1}{2}\<L_2 v,v\>,
\end{equation*}
where $w=u+iv$. In our case, we have
\begin{align*}
  L_1 &= -\frac{1}{2}\Delta +\omega -3\phi_\omega^2 +5\phi_\omega^4 ,\\
  L_2 & =-\frac{1}{2}\Delta +\omega -\phi_\omega^2 +\phi_\omega^4  .
\end{align*}
We then need to check:
\begin{assumption}\label{hyp:spectral}
  For each $\omega\in ]0,\tfrac{3}{16}[$, the Hessian $S''(\phi_\omega)$ has exactly one
  negative eigenvalue; its kernel is
  spanned by $i\phi_\omega$ and $\nabla \phi_\omega$, and the rest of its spectrum is
  positive and bounded away from zero. 
\end{assumption}
If this holds true, then:
\begin{itemize}
\item[(a)] If $\frac{\d}{\d\omega}M(\phi_\omega)>0$,
  then the standing wave $e^{i\omega t}\phi_\omega(x)$ is orbitally
  stable.
  \item[(b)] If $ \frac{\d}{\d\omega}M(\phi_\omega)<0$,
  then the standing wave $e^{i\omega t}\phi_\omega(x)$ is
  unstable.
\end{itemize}

Indeed, the authors of \cite{KOPV17} proved that
Assumption~\ref{hyp:spectral} holds true for the cubic-quintic NLS in
3D. These properties are established in
\cite{LewinRotaNodari-p,LewinRotaNodari2015} 
in a more general setting, covering \eqref{eq:nls} for $d=2$. 

\begin{proposition}[Proposition~2.4 from \cite{KOPV17}]
  Fix $\ell=0,1,2,\dots$, and consider the restriction of $L_1$ to
  functions of the form $f(|x|)Y(x/|x|)$, where $Y$ is a spherical
  harmonic of degree $\ell$.
  \begin{enumerate}
  \item When $\ell=0$, the operator has exactly one negative
    eigenvalue; it is simple.
    \item When $\ell =1$, there are no negative eigenvalues. Zero is an
      eigenvalue and its eigenspace is spanned by the three components
      of $\nabla \phi_\omega$.
  \item When $\ell\ge 2$, the operator is positive definite. 
  \end{enumerate}
\end{proposition}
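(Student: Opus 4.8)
The plan is to diagonalize $L_1$ by spherical harmonics and reduce the whole question to a one-parameter family of radial Schr\"odinger operators. Writing an element of the degree-$\ell$ sector as $f(|x|)Y(x/|x|)$, the operator acts as
\[
  L_1^{(\ell)} = -\frac{1}{2}\(\d_r^2 + \frac{d-1}{r}\d_r - \frac{\ell(\ell+d-2)}{r^2}\) + \omega - 3\phi_\omega^2 + 5\phi_\omega^4
\]
on $L^2\((0,\infty),r^{d-1}dr\)$, and the spectrum of $L_1$ is the union of those of the $L_1^{(\ell)}$, each eigenvalue carrying the multiplicity of the degree-$\ell$ harmonics. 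The structural point is that $L_1^{(\ell)} = L_1^{(0)} + \frac{\ell(\ell+d-2)}{2r^2}$: the centrifugal term is nonnegative and strictly increasing in $\ell$, so by min-max every eigenvalue is nondecreasing in $\ell$, strictly so on nontrivial functions. Everything then follows from understanding the sectors $\ell=0$ and $\ell=1$.

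For $\ell=1$ I would differentiate the soliton equation \eqref{eq:soliton} in $x_j$ to obtain $L_1(\d_j\phi_\omega)=0$; thus each $\d_j\phi_\omega = \phi_\omega'(r)\,x_j/r$ lies in $\ker L_1$ within the $\ell=1$ sector, so $\phi_\omega'$ lies in $\ker L_1^{(1)}$. Since $\phi_\omega$ is radially decreasing (Proposition~\ref{prop:phi}), $\phi_\omega'(r)$ has constant sign on $(0,\infty)$, and a constant-sign element of the kernel of a radial Schr\"odinger operator must be its lowest eigenfunction (Perron--Frobenius/Sturm nodal theory). Hence $0$ is the simple bottom of $L_1^{(1)}$: no negative eigenvalues, with kernel spanned by the components of $\nabla\phi_\omega$, which is item~(2). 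For $\ell\ge 2$, strict monotonicity gives, for $f\neq0$,
\[
  \<L_1^{(\ell)}f,f\> = \<L_1^{(1)}f,f\> + \int_0^\infty \frac{\ell(\ell+d-2)-(d-1)}{2r^2}\,|f|^2\, r^{d-1}dr > 0,
\]
the first term being $\ge 0$ and the centrifugal correction strictly positive even on the direction $\phi_\omega'$; thus $L_1^{(\ell)}$ is positive definite, giving item~(3).

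The heart of the matter is $\ell=0$. For existence of a negative eigenvalue I would set $w:=-\phi_\omega'>0$ and use $L_1^{(1)}w=0$ to get $L_1^{(0)}w=-\frac{d-1}{2r^2}w<0$ on $(0,\infty)$, so that $\<L_1^{(0)}w,w\>=-\int_0^\infty\frac{d-1}{2r^2}|w|^2\,r^{d-1}dr<0$; the bottom of $L_1^{(0)}$ is therefore strictly negative, and by Perron--Frobenius it is a simple eigenvalue with a positive radial eigenfunction. The genuinely hard step, which I expect to be the main obstacle, is the upper bound: that $L_1^{(0)}$ has \emph{at most} one negative eigenvalue, i.e. that its second radial eigenvalue is nonnegative. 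The obvious comparison fails, because relative to the nonnegative operator $L_1^{(1)}$ the radial operator differs by the \emph{negative} potential $-\frac{d-1}{2r^2}$, which could a priori produce a second bound state; worse, the nonlinearity is truly non-monotone, since $s\mapsto s^2-s^4$ increases then decreases while the soliton amplitude may exceed $1/\sqrt2$ (cf. \eqref{eq:phiLinfini}), so the sign-definite perturbation arguments available for pure powers do not apply. I would therefore resort to Sturm--Liouville oscillation theory, reducing the count to showing that the regular zero-energy solution of $L_1^{(0)}h=0$ has exactly one interior zero, and controlling this via the precise positivity, monotonicity and exponential decay of $\phi_\omega$ from Proposition~\ref{prop:phi} together with the single sign change of $L_1^{(0)}\phi_\omega=-2\phi_\omega^3+4\phi_\omega^5$. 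A complementary route uses the real-analytic dependence $\omega\mapsto\phi_\omega$ (Proposition~\ref{prop:omega-asym}): the $\ell=0$ Morse index is locally constant in $\omega$ as long as no $\ell=0$ eigenvalue crosses zero, which is ensured by nondegeneracy of $L_1^{(0)}$ (tied to the uniqueness theory of \cite{SerrinTang00}), so the count can be fixed at the limit $\omega\to0$, where $\phi_\omega$ approaches a cubic ground state whose linearized operator is classically known to have Morse index one in the radial sector.
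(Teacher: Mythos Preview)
Your approach is correct and aligns with what the paper indicates: the paper does not prove this proposition but cites it from \cite{KOPV17}, remarking only that ``the proof of this result relies on Sturm Oscillation Theorem, since the analysis boils down to second order ODEs for the radial function $f$.'' Your spherical-harmonic decomposition, the Perron--Frobenius/nodal argument for $\ell=1$, the centrifugal monotonicity for $\ell\ge 2$, and your honest identification of the $\ell=0$ upper bound as the crux---to be settled by Sturm oscillation (or, alternatively, by a continuation-in-$\omega$ argument pinned at the cubic limit)---are precisely the ingredients of the cited proof.
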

The proof of this result relies on Sturm Oscillation Theorem, since
the analysis boils down to second order ODEs for the radial function
$f$. Note that the proof from \cite{KOPV17} can be readily adapted to the
2D case, by replacing spherical harmonics with functions of the form
$e^{i\ell \theta}$ in radial coordinates. The above proposition is
complemented by the following one:
\begin{proposition}[Proposition~2.5 from \cite{KOPV17}, Lemma~3 from \cite{LewinRotaNodari2015}] Let $\delta=\delta(r)$
  be the solution to
  \begin{equation*}
    -\frac{1}{2}\delta'' -\frac{1}{r}\delta' +
    \(5\phi_\omega^4-3\phi_\omega^2+\omega\)\delta=0 
  \end{equation*}
  obeying $\delta(0)=1$. Then $\delta(r)\to -\infty$ as $r\to
  \infty$. Correspondingly, zero is not an eigenvalue of $L_1$
  restricted to radial functions. 
\end{proposition}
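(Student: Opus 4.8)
The plan is to read the statement as a purely ODE assertion about the second-order equation $L_1\delta=0$ written in radial coordinates for $d=3$, and to show that its unique solution regular at the origin diverges to $-\infty$; the spectral conclusion is then immediate, because any radial $L^2$ element of $\ker L_1$ restricts on $(0,\infty)$ to a multiple of this regular solution, and an unbounded function cannot lie in $L^2(r^2\,dr)$. First I would remove the first-order term by the substitution $y(r)=r\,\delta(r)$, which in dimension three turns the equation into the half-line Schr\"odinger equation $-\tfrac12 y''+(5\phi_\omega^4-3\phi_\omega^2+\omega)\,y=0$ with $y(0)=0$, $y'(0)=1$, and with no surviving inverse-square term. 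Since $\phi_\omega$ decays exponentially (Proposition~\ref{prop:phi}), the potential tends to $\omega>0$, so standard asymptotic integration shows every solution behaves like a combination of $e^{\pm\sqrt{2\omega}\,r}$; hence $\delta=y/r$ either decays like $e^{-\sqrt{2\omega}\,r}/r$ or grows like $e^{\sqrt{2\omega}\,r}/r$. The whole point is to show that the regular solution sits on the growing branch, with negative sign.

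Next I would fix the sign and the number of sign changes using Sturm theory together with the spectral information already available. By the preceding proposition (the case $\ell=0$), $L_1$ restricted to radial functions has exactly one negative eigenvalue $-\lambda_0<0$, which is simple with a positive, nodeless eigenfunction $\chi_0$. Because $0$ lies strictly below the bottom $\omega$ of the essential spectrum, the oscillation theorem for the regular-at-origin solution on the half-line gives that $\delta$ has exactly as many zeros in $(0,\infty)$ as there are eigenvalues below $0$, namely one. Thus $\delta$ starts at $\delta(0)=1>0$, crosses zero exactly once, and keeps a constant negative sign thereafter. The Wronskian identity
\begin{equation*}
 \frac{d}{dr}\Big[r^2\big(\chi_0\delta'-\delta\chi_0'\big)\Big]=-2\lambda_0\, r^2\,\chi_0\,\delta
\end{equation*}
records the same structure, since it shows $\delta/\chi_0$ is strictly monotone on any interval where $\delta$ keeps its sign. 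Consequently, once $\delta$ is negative it stays negative, and the only remaining alternative to $\delta\to-\infty$ is that $\delta$ decays to $0$ from below, i.e. that $\delta$ is the exceptional square-integrable solution and $0$ is a radial eigenvalue.

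The crux — and the step I expect to be the main obstacle — is precisely to exclude this decaying branch for \emph{every} $\omega\in\,]0,\tfrac{3}{16}[$; note that Sturm's zero count alone cannot do this, as the putative second radial eigenfunction would also have exactly one zero. Here I would first extract the structural information coming from differentiating the soliton equation in $\omega$: using the real analyticity of $\omega\mapsto\phi_\omega$ (Proposition~\ref{prop:omega-asym}), the function $\xi:=\partial_\omega\phi_\omega$ is exponentially localized and solves $L_1\xi=-\phi_\omega$, so if $\delta$ spanned the radial kernel then Green's identity (boundary terms vanishing by decay) would force the solvability condition $\int_0^\infty r^2\phi_\omega\,\delta\,dr=0$. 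This orthogonality is consistent with the one sign change of $\delta$, so it does not by itself close the argument; the genuine content is a quantitative exclusion, showing directly that the coefficient of the growing mode $e^{\sqrt{2\omega}r}/r$ in $\delta$ is nonzero and negative. This is where one must descend to a Riccati/Wronskian comparison built from $\phi_\omega$ and $\phi_\omega'$ — exploiting that $\phi_\omega'<0$ lies in the $\ell=1$ kernel and tracking the sign of the potential $5\phi_\omega^4-3\phi_\omega^2+\omega$ — as carried out in \cite{LewinRotaNodari2015,KOPV17}. It is this separation of the decaying from the growing solution, rather than the soft Sturm and asymptotic steps, that carries the real weight, and it is a non-degeneracy fact valid for all admissible $\omega$, independent of the separate sign of $\tfrac{d}{d\omega}M(\phi_\omega)$.

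Putting these together, $\delta$ lies on the growing branch and is eventually negative, so $\delta(r)\to-\infty$ as $r\to\infty$; in particular the regular solution fails to be square-integrable against $r^2\,dr$, and therefore $0$ is not an eigenvalue of $L_1$ restricted to radial functions.
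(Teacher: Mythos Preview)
The paper does not supply its own proof of this proposition: it is quoted as Proposition~2.5 of \cite{KOPV17} and Lemma~3 of \cite{LewinRotaNodari2015}, and no argument is reproduced. There is therefore nothing in the paper to compare your attempt against.

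As to the substance of your sketch: the reduction $y=r\delta$ in $d=3$, the asymptotic dichotomy $e^{\pm\sqrt{2\omega}\,r}/r$ coming from the exponential decay of $\phi_\omega$, and the Sturm count yielding exactly one sign change of $\delta$ (from the single negative radial eigenvalue of $L_1$) are all correct and are exactly the soft ingredients of the cited proofs. You also correctly isolate the one genuinely hard step --- ruling out that $\delta$ lies on the decaying branch, equivalently that $0$ is a radial eigenvalue --- and correctly observe that neither the zero count nor the orthogonality relation $\int r^2\phi_\omega\delta\,dr=0$ obtained from $L_1(\partial_\omega\phi_\omega)=-\phi_\omega$ suffices to exclude it. At that point, however, your argument is not self-contained: you defer the decisive comparison/Wronskian step to \cite{LewinRotaNodari2015,KOPV17} rather than carrying it out. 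This is an honest and accurate roadmap of those references, but it is not an independent proof, and you should be aware that the step you black-box is the entire content of the proposition; everything preceding it is standard Sturm--Liouville bookkeeping.
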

Thus, all conditions necessary to invoke the Grillakis-Shatah-Strauss theory are satisfied and one can infer (in-)stability of ground states 
from the properties of the map $\omega\mapsto M(\phi_\omega)$.
At this stage, all items of Theorem~\ref{theo:2Dstability} and Proposition~\ref{prop:soliton3D} are proved, except the 
ones concerning $\mathcal E(\rho)$-stability of solitary waves.


\section{Orbital stability of the set of energy minimizers}
\label{sec:cazenave-lions}

\subsection{Two-dimensional case}
\label{sec:2D}

We now prove the fourth point of Theorem~\ref{theo:2Dstability}.\\

\noindent {\bf First step.} We show that for all $\rho>M(Q)$,
\begin{equation*}
 \inf\left\{E(u)\, ;\, u\in \Gamma(\rho)\right\} =-\nu,
\end{equation*}
for some finite $\nu>0$. To prove that the infimum is finite, we use 
H\"older's inequality \eqref{eq:holder}, to infer
\begin{equation*}
  E(u) \ge \frac{1}{2}\|\nabla u(t)\|_{L^2(\R^d)}^2 -\frac{\sqrt\rho}{2}\|
  u(t)\|_{L^6(\R^d)}^3+ \frac{1}{3}\|  u(t)\|_{L^6(\R^d)}^6,
\end{equation*}
and thus $E(u)$ is bounded from below. To see that the infimum is negative,
consider the $L^2$-invariant  scaling, for $\lambda>0$,
\begin{equation*}
  u_\lambda(x)=\lambda^{d/2}u(\lambda x),
\end{equation*}
which, for $d=2$, implies
\begin{equation*}
  E(u_\lambda) =\frac{\lambda^2}{2}\(\|\nabla u\|_{L^2}^2
  -\|u\|_{L^4}^4 +\frac{2}{3}\lambda^2\|u\|_{L^6}^6\).
\end{equation*}
In view of the sharp Gagliardo-Nirenberg inequality, and since $\|u\|_{L^2}^2>\|Q\|_{L^2}^2$, we may choose a profile $u\in H^1$
so that the terms independent of
$\lambda$ inside the parentheses become negative, e.g., take 
\[
u = \sqrt{\frac{\rho}{M(Q)}} \, Q, \ \text{with $\lambda>0$
sufficiently small}.
\]

\noindent {\bf Second step.} Any minimizing sequence is bounded away
from zero in $L^4$. Let $(u_n)_{n\ge 0}$ be a minimizing sequence: for
$n$ sufficiently large, $E(u_n)\le -\nu/2$, hence
\begin{equation*}
  \|u_n\|_{L^4}^4\ge \nu>0.
\end{equation*}

\noindent {\bf Third step.} In view of \cite{PL284a} (see also
\cite[Proposition~1.7.6]{CazCourant}),  we have the standard
trichotomy of concentration compactness. From the second step,
vanishing is ruled out, so we have to rule out dichotomy to infer
compactness. Arguing by contradiction, suppose that, after extraction of suitable subsequences,
there exist $(v_k)_{k\ge 0}$, $(w_k)_{k\ge 0}$ in $H^1(\R^2)$, such
that
\begin{align*}
  & \operatorname{supp} v_k\cap \operatorname{supp} w_k=\emptyset,\  |v_k|+|w_k|\le |u_{n_k}|,\ \|v_k\|_{H^1} + \|w_k\|_{H^1} \le C\|u_{n_k}\|_{H^1} ,
\end{align*} 
satisfying
\begin{align*} 
  \|v_k\|_{L^2}^2\Tend k \infty \theta\rho,\quad
    \|w_k\|_{L^2}^2\Tend k \infty (1-\theta)\rho,\quad\text{for some
    }\theta\in ]0,1[,
  \end{align*}
and
\begin{align*} 
&\liminf_{k\to \infty} \(\int |\nabla u_{n_k}|^2 - \int|\nabla
  v_k|^2 -\int |\nabla  w_k|^2 \)\ge 0,\\
& \left|\int |u_{n_k}|^p -\int|v_k|^p- \int|w_k|^p\right|\Tend k \infty0,
  \end{align*}
for all $2\le p<\infty$. We infer
\begin{equation*}
  \liminf_{k\to \infty}\(E\(u_{n_k}\)-E(v_k)-E(w_k)\)\ge 0,
\end{equation*}
hence
\begin{equation}\label{eq:lower}
  \limsup_{k\to \infty} \(E(v_k)+E(w_k)\)\le -\nu.
\end{equation}
Following an idea from \cite{CoJeSq10}, we then use a scaling argument
rather than a multiplicative one as in \cite{CaLi82}. Let
\begin{align*}
  \tilde v_k(x) & = v_k\(\lambda_k^{-1/2}x\),\quad  \lambda_k =
                  \frac{\rho}{\|v_k\|_{L^2}^2} \\
  \tilde w_k(x)& = w_k\(\mu_k^{-1/2}x\),\quad \mu_k =
                  \frac{\rho}{\|w_k\|_{L^2}^2} .
\end{align*}
Since $ \tilde v_k$ and $ \tilde w_k$ have mass $\rho $,
\begin{equation*}
  E(\tilde v_k),\ E(\tilde w_k)\ge -\nu.
\end{equation*}
On the other hand, we compute
\begin{equation*}
  E(\tilde v_k) = \lambda_k\(\frac{1}{2\lambda_k}\int |\nabla v_k|^2
  -\frac{1}{2}\int |v_k|^4 +\frac{1}{3}\int |v_k|^6\),
\end{equation*}
and so
\begin{equation*}
  E(v_k) = \frac{1}{\lambda_k}E(\tilde v_k)
  +\frac{1-\lambda_k^{-1}}{2}\int |\nabla v_k|^2\ge
  \frac{-\nu}{\lambda_k}+\frac{1-\lambda_k^{-1}}{2}\int |\nabla
  v_k|^2. 
\end{equation*}
Doing the same for $E(w_k)$, we find
\begin{align*}
  E(v_k) + E(w_k) &\ge -\nu\( \frac{1}{\lambda_k}+\frac{1}{\mu_k}\)
  +\frac{1-\lambda_k^{-1}}{2}\int |\nabla  v_k|^2
                    +\frac{1-\mu_k^{-1}}{2}\int |\nabla  w_k|^2\\
  &\ge -\nu\( \frac{1}{\lambda_k}+\frac{1}{\mu_k}\)
  +\frac{1-\lambda_k^{-1}}{2\|v_k\|_{L^2}^2}\| v_k\|_{L^4}^4
                    +\frac{1-\mu_k^{-1}}{2\|w_k\|_{L^2}^2}\| w_k\|_{L^4}^4,
\end{align*}
where in the second step, we have used the Gagliardo-Nirenberg inequality. Passing to the
limit, yields
\begin{align*}
  \lim\inf_{k\to \infty} \( E(v_k) + E(w_k) \)\ge -\nu
  +\frac{1}{2}\min\(\frac{1-\theta}{\theta\rho},
  \frac{\theta}{(1-\theta)\rho}\) \liminf_{k\to \infty}
  \|u_{n_k}\|_{L^4}^4,
\end{align*}
and hence a contradiction to \eqref{eq:lower}, in view of the second
step and $\theta\in ]0,1[$. 
\smallbreak

\noindent {\bf Conclusion.}  At this stage, we have all the arguments
to conclude in the classical way. Assume, by contradiction, that
there exist a sequence $(u_{0,n})_{n\in \N}\subset H^1(\R^2)$, such that
\begin{equation}\label{eq:8.3.17}
  \|u_{0,n}-\phi\|_{H^1}\Tend n \infty 0,
\end{equation}
and a sequence $(t_n)_{n\in \N}\subset \R$, such that the sequence of solutions $u_n$ to \eqref{eq:nls} associated to the initial data $u_{0,n}$ satisfies
\begin{equation}\label{eq:8.3.18}
  \inf_{\varphi\in \mathcal E(\rho)}\left\|u_n(t_n, \cdot) -
    \varphi\right\|_{H^1(\R^2)}>\eps,
\end{equation}
for some $\eps>0$.
Introducing $v_n=u_n(t_n, \cdot)$, the above inequality also reads
\begin{equation*}
  \inf_{\varphi\in \mathcal E(\rho)}\|v_n-\varphi\|_{H^1(\R^2)}>\eps.
\end{equation*}
In view of \eqref{eq:8.3.17}, 
\begin{equation*}
  \int_{\R^2}|u_{0,n}|^2 \Tend n \infty \int_{\R^2}|\phi|^2,\quad
  E\(u_{0,n}\)\Tend n \infty E(\phi)=\inf_{v\in \Gamma(\rho)}E(v).
  \end{equation*}
The conservation laws for mass and energy imply
\begin{equation*}
  \int_{\R^2}|v_{n}|^2 \Tend n \infty \int_{\R^2}|\phi|^2,\quad
  E\(v_{n}\)\Tend n \infty E(\phi),
\end{equation*}
so $(v_n)_n$ is a minimizing sequence for the problem
\eqref{eq:8.3.5}. From the previous steps,   there exist a
subsequence, still denoted by $u_n$, and a sequence 
    $y_n\in \R^2$ such that $v_n(\cdot -y_n)$ has a strong limit $u$
    in $H^1(\R^2)$. In particular, $u$ satisfies \eqref{eq:8.3.5},
    hence a contradiction.


\subsection{Three-dimensional case}
\label{sec:3D}

It remains to address item (5) of Proposition~\ref{prop:soliton3D} 
\smallbreak
To this end, Theorem~4.1, (iv) from \cite{KOPV17} ensures that for $\rho$ sufficiently
large 
\begin{equation*}
  \inf\left\{ E(u);\ u\in \Gamma(\rho)\right\} =[E_{\rm min}(m),\infty[,
\end{equation*}
with $E_{\rm min}(m)<0$. It is then possible to resume the arguments
presented in Section~\ref{sec:2D} above, and obtain $\mathcal E(\rho)$-stability of 
three-dimensional solitary waves via the Cazenave-Lions argument.

\bibliographystyle{siam}

\bibliography{biblio}
\end{document}